\newtheorem{theorem}{Theorem}[section]
\newtheorem{lemma}[theorem]{Lemma}
\newtheorem{proposition}[theorem]{Proposition}
\theoremstyle{definition}
\theoremstyle{remark}
\numberwithin{equation}{section}
\def\ee{\epsilon}
\def\om{\omega}
\def\la{\lambda}
\def\om{\omega}
\begin{document}
\title[2D NSE with horizontal viscosity]
{On the two-dimensional Navier-Stokes equations with horizontal viscosity}

\date{September 15, 2024}

\author[C. Cao]{Chongsheng Cao}
\address{Department of Mathematics \& Statistics \\Florida International University\\
Miami, Florida 33199, USA} \email{caoc@fiu.edu}

\author[Y. Guo]{Yanqiu Guo}
\address{Department of Mathematics \& Statistics \\Florida International University\\
Miami, Florida 33199, USA} \email{yanguo@fiu.edu}

\begin{abstract}
This paper is concerned with a 2D channel flow that is periodic horizontally but bounded above and below by hard walls. We assume the presence of horizontal viscosity only. We study the well-posedness, large-time behavior, and stability of solutions. 
For global well-posedness, we aim to assume less differentiability on initial velocity $(u_0, v_0)$: in particular, we assume $u_0,v_0\in L^2(\Omega)$ and $\partial_y u_0 \in L^2(\Omega)$.
\end{abstract}

\maketitle

\section{Introduction}

\subsection{Motivation}
Navier-Stokes equations (NSEs) are fundamental equations for viscous fluid flows, whereas Euler equations describe inviscid fluids. In two dimensions, NSEs and Euler equations are both globally well-posedness. 
In particular, 2D NSEs have a unique global weak solution if the initial velocity belongs to $L^2(\Omega)$, while 2D Euler equations have a unique global weak solution if the initial vorticity is taken from $L^{\infty}(\Omega)$ \cite{Yudo}. Mathematically, it is interesting to study 2D NSEs with only horizontal viscosity, and ask the regularity of initial data needed to establish the global well-posedness of weak solutions. We aim to assume less differentiability on initial velocity $(u_0, v_0)$: in particular, we assume 
$u_0,v_0\in L^2(\Omega)$ with $\partial_y u_0 \in L^2(\Omega)$ to obtain the existence and uniqueness of global weak solutions. Our equation can be considered as a model ``between" NSEs and Euler equations in 2D. Although this paper focuses on a 2D fluid flow, it is worth mentioning that, solutions of 3D Euler equations may form singularity in finite time \cite{Elgindi}, while the global regularity of 3D NSEs and the uniqueness of Leray-Hopf weak solutions remain major open problems. However, with special forcing terms, non-uniqueness of Leray solutions to 3D forced NSEs was shown in \cite{ABC}, and finite time blow-up for the hypodissipative NSEs was studied in \cite{CMZ}.

Consider 2D Navier-Stokes equations with only horizontal viscosity:
\begin{align}  \label{PDE}
\mathbf u_t - \mathbf u_{xx} + (\mathbf u\cdot \nabla)\mathbf u +  \nabla p = \mathbf f(x,y,t), 
\end{align}
with $\nabla\cdot  \mathbf u = u_x + v_y=0$, where $\mathbf u = (u,v)$ represents the velocity field of the 2D fluid. 
We consider a 2D flow in the channel $(x,y) \in \mathbb R \times [0,1]$. Assume that the flow satisfies the boundary condition 
\begin{align} \label{boundary}
v|_{y=0}=v|_{y=1}=0. 
\end{align}
Moreover, we assume that velocity $\mathbf u=(u,v)$, pressure $p$ and force $\mathbf f=(f_1, f_2)$ are periodic in the $x$ variable.
Without loss of generality, we assume that the force $\mathbf f$ is divergence free and satisfies that $f_2|_{y=0}=f_2|_{y=1}=0$. In the paper, we consider a typical period $(x,y)\in \Omega = \mathbb T \times [0,1]$. Here, $\mathbb T$ is the 1D torus (i.e., a circle) identified with the interval $[0,1]$. Thus, geometrically, $\Omega = \mathbb T \times [0,1]$ is identified with a cylindrical surface of height 1. Physically, this setting represents a 2D channel flow which is periodic in the $x$-direction, 
and the boundaries $y=0$ and $y=1$ are impenetrable, so the normal component of the velocity field vanishes at the boundaries.

In paper \cite{LZZ}, equation (\ref{PDE}) was considered on $\mathbb R^2$ (or on $\mathbb T^2$) without forcing, and global well-posedness of weak solutions was shown by assuming initial velocity $(u_0,v_0)$ satisfies that 
$\partial_y u_0$ and $\partial_y v_0$ both in $L^2(\mathbb R^2)$. A stochastic version of (\ref{PDE}) was also considered in \cite{LZZ}. The novelty of our well-posedness result lies in that we assume less differentiability on initial data; in particular, we assume only $\partial_y u_0$ belongs to $L^2(\Omega)$. Please see Theorem \ref{thm-1}. 

Let us remark that the 2D Euler equations for inviscid fluid are globally well-posed and the upper bound of the $H^s$ norm of the velocity grows with a double exponential rate as $t\rightarrow \infty$, if the initial velocity belongs to $H^s(\Omega)$ with $s>2$.
When the domain is a disk with boundary, an initial data for the 2D Euler equations was constructed in \cite{KS} for which the gradient of vorticity exhibits double exponential growth in time.
One of our results for equation (\ref{PDE}) shows that if $\mathbf u_0 \in H^2(\Omega)$, then the $H^2$ norm of $\mathbf u$ is bounded by a constant for all time, if we impose a restriction on the forcing $\mathbf f$. 
Please see Theorem \ref{thm-3}. A similar result for equation (\ref{PDE}) on $\mathbb T \times \mathbb R$ without forcing was shown in \cite{DWXZ}. 

Finally, we would like to mention that, the non-uniqueness of 2D Euler equations was established in \cite{V1,V2} by constructing initial vorticity and external force in some $L^p(\mathbb R^2)$ space. 
Also, non-uniqueness of Leray solutions for 2D forced NSEs with hypodissipation was shown in \cite{AC}.

\smallskip

\subsection{Main results}
Let $\Omega = \mathbb T \times [0,1]$, where $\mathbb T$ is the 1D torus identified with $[0,1]$.

We define the function space
\begin{align}  \label{space}
\mathcal H =  \Big\{ &\mathbf u = (u,v) \in (L^2(\Omega))^2:     u=\sum_{\mathbf k \in \mathbb Z^2} a_{\mathbf k} e^{2 i \pi \mathbf k \cdot \mathbf x}, \; \overline{a_{\mathbf k}} = a_{-\mathbf k}, \notag\\
&v=\sum_{k_1 \in \mathbb Z, \, k_2\in \mathbb Z^+} b_{(k_1,k_2)} e^{2i \pi k_1 x} \sin (\pi k_2 y),    \; \overline{b_{(k_1, k_2)}} = b_{(-k_1, k_2)}  ,   \;\;\text{and}  \; \nabla \cdot \mathbf u =0     \Big\}.
\end{align}
The function space $\mathcal H$ consists of all $L^2(\Omega)$ real-valued divergence-free 2D vector fields $(u,v)$ such that $v|_{y=0}=v|_{y=1} =0$.

Denote the space 
\begin{align} \label{def-V}
\mathcal V = \mathcal H \cap (H^1(\Omega))^2. 
\end{align}

Throughout, we denote the $L^p(\Omega)$ norm by $\|\cdot\|_p$, $1\leq p \leq \infty$.

First, we state our major result concerning the global well-posedness of weak solutions to equation (\ref{PDE}).

\begin{theorem} [Global well-posedness]  \label{thm-1}
Assume initial velocity $\mathbf u_0 = (u_0,v_0) \in \mathcal H$ with $\partial_y u_0 \in L^2(\Omega)$.  
Also, assume that $\mathbf f = (f_1,f_2)  \in L^1(0,T; \mathcal H)$ with $\partial_y f_1\in L^2(0,T; L^2(\Omega))$ for any $T>0$. 
Then equation (\ref{PDE}) has a unique global weak solution satisfying that, for any $T>0$, 
$\mathbf u \in C([0,T];\mathcal H)$, $u_y \in L^{\infty}(0,T;L^2(\Omega))$, 
$u_x, v_x, u_{xy}\in L^2(0,T;L^2(\Omega))$, and
\begin{align} \label{weak-f}
\frac{d}{dt} (\mathbf u, \mathbf w)_{L^2} + (\mathbf u_x, \mathbf w_x)_{L^2} + \int_{\Omega}  [(\mathbf u\cdot \nabla)\mathbf u] \cdot \mathbf w dx dy = (\mathbf f, \mathbf w)_{L^2},    \;\; t>0,
\end{align}
for any $\mathbf w \in \mathcal V$.
\end{theorem}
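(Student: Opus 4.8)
The plan is to construct the solution by a Galerkin scheme adapted to the Fourier--sine basis built into the definition of $\mathcal H$, to derive two a priori estimates that are uniform in the truncation parameter, to pass to the limit by compactness, and finally to establish uniqueness by an energy argument on the difference of two solutions. The first estimate is the basic energy identity: pairing \eqref{PDE} with $\mathbf u$ in $L^2$, the pressure term drops since $\mathbf u$ is divergence free and $v$ vanishes on the walls, and the convective term drops as well, giving
\begin{align*}
\tfrac12\tfrac{d}{dt}\|\mathbf u\|_2^2 + \|\mathbf u_x\|_2^2 = (\mathbf f,\mathbf u)_{L^2}.
\end{align*}
With $\mathbf f\in L^1(0,T;\mathcal H)$ this yields $\mathbf u\in L^\infty(0,T;L^2)$ together with $u_x,v_x\in L^2(0,T;L^2)$.

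The second and crucial estimate controls $u_y$. Differentiating the $u$-component of \eqref{PDE} in $y$ and using $v_y=-u_x$ to cancel the two terms $u_xu_y$, the function $u_y$ satisfies $(u_y)_t-(u_y)_{xx}+\mathbf u\cdot\nabla(u_y)+p_{xy}=\partial_y f_1$. Testing against $u_y$, the transport term vanishes thanks to $\nabla\cdot\mathbf u=0$ and $v|_{y=0}=v|_{y=1}=0$, and the diffusion produces $\|u_{xy}\|_2^2$; what remains is
\begin{align*}
\tfrac12\tfrac{d}{dt}\|u_y\|_2^2 + \|u_{xy}\|_2^2 = -\int_{\Omega} p_{xy}\,u_y\,dxdy + \int_{\Omega} (\partial_y f_1)\,u_y\,dxdy.
\end{align*}
To treat the pressure coupling I would record two facts about $p$: evaluating the $v$-equation on the walls (where $v$, $v_x$, $v_{xx}$ and $f_2$ all vanish) gives the Neumann condition $p_y|_{y=0}=p_y|_{y=1}=0$, while taking the divergence of \eqref{PDE} gives the Poisson equation $\Delta p = -2(u_x^2+u_yv_x)$. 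Integrating $\int_\Omega p_{xy}u_y$ by parts in $x$ and then in $y$, the wall contribution drops by the Neumann condition and one is left with $\int_\Omega p_{yy}u_x$, which the Poisson equation and a further integration by parts convert into cubic expressions in $u_x,u_y,v_x$.

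The main obstacle is precisely here: because the equation dissipates only in $x$, these nonlinear terms must be closed using the single dissipative quantity $\|u_{xy}\|_2$ together with the already-controlled $\|u_x\|_2$ and $\|v_x\|_2$. I expect to rely on anisotropic Ladyzhenskaya/Gagliardo--Nirenberg inequalities on $\Omega$, which estimate $L^4$ and product norms by splitting horizontal and vertical derivatives, so that the cubic contributions can be absorbed into $\epsilon\|u_{xy}\|_2^2$ plus terms of the form $C(\|u_x\|_2^2+\|v_x\|_2^2)\|u_y\|_2^2$. A Gronwall argument exploiting the $L^2(0,T;L^2)$ integrability of $u_x,v_x$ from the energy estimate then bounds $\|u_y\|_2$ on $[0,T]$ and delivers $u_y\in L^\infty(0,T;L^2)$ and $u_{xy}\in L^2(0,T;L^2)$.

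With these bounds the approximations lie in a bounded set of $L^2(0,T;(H^1(\Omega))^2)$, since $v_y=-u_x$ and $u_y\in L^\infty_tL^2$ together give full control of $\nabla\mathbf u$ in $L^2(0,T;L^2)$, while $\mathbf u_t$ is bounded in $L^1(0,T;\mathcal V')$; the Aubin--Lions--Simon theorem then gives strong convergence in $L^2(0,T;L^2)$, which suffices to pass to the limit in the quadratic nonlinearity and obtain a weak solution satisfying \eqref{weak-f}, with $\mathbf u\in C([0,T];\mathcal H)$ following by a standard interpolation lemma. For uniqueness I would take two solutions $\mathbf u^1,\mathbf u^2$ with the same data, set $\mathbf w=\mathbf u^1-\mathbf u^2$, and test the difference equation against $\mathbf w$; after the usual cancellations only $\int_\Omega(\mathbf w\cdot\nabla)\mathbf u^2\cdot\mathbf w$ survives, and its dangerous piece involves $\partial_y u^2\in L^\infty(0,T;L^2)$. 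I would again control this by the anisotropic inequalities, using that $\partial_y w_1\in L^\infty(0,T;L^2)$ and $\partial_y w_2=-\partial_x w_1$, together with the horizontal dissipation $\|\mathbf w_x\|_2$, to absorb the critical term and close a Gronwall inequality forcing $\mathbf w\equiv 0$.
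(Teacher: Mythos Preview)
Your overall architecture (energy estimate, $u_y$ estimate, Galerkin/compactness, uniqueness by energy on the difference) matches the paper, and your observation that the transport term in the $u_y$ equation vanishes is correct. The gap is in the pressure step of the $u_y$ estimate. After your integration by parts one has $\int_\Omega p_y u_{xy}= -\int_\Omega p_{yy}u_x$, and substituting the Poisson equation $p_{yy}=-2u_x^2-2u_yv_x-p_{xx}$ gives
\[
-\int_\Omega p_{yy}u_x \;=\; 2\int_\Omega u_x^3 \;+\; 2\int_\Omega u_yv_xu_x \;+\; \int_\Omega p_{xx}u_x.
\]
The first two integrals are indeed closable with anisotropic product estimates, but the last one still carries the pressure: a further integration by parts only converts it into $-\int_\Omega p_x u_{xx}$ or $\int_\Omega p_y v_{xx}$, and neither $u_{xx}$ nor $v_{xx}$ is controlled by the energy estimate or by the $u_{xy}$ dissipation. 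So the reduction to ``cubic expressions in $u_x,u_y,v_x$'' does not go through as stated, and the Gronwall loop cannot close.

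The paper's device for this step is to introduce an auxiliary potential $\phi$ solving $-\Delta\phi=u_y$ with $\phi|_{y=0}=\phi|_{y=1}=0$, so that $\int_\Omega p_y u_{xy}=-\int_\Omega p_y\,\Delta\phi_x=-\int_\Omega(\Delta p_y)\phi_x$ (both boundary contributions vanish thanks to $\phi|_{y=0,1}=0$ and $p_y|_{y=0,1}=0$), and then to substitute $-\Delta p=(u^2)_{xx}+(v^2)_{yy}+2(uv)_{xy}$ to obtain $\int_\Omega\big[(u^2)_{xxy}+(v^2)_{yyy}+2(uv)_{xyy}\big]\phi_x$. The point is that $\phi$ gains two derivatives over $u_y$: one has $\|\nabla\phi\|_2\le\|u\|_2$, $\|D^2\phi\|_2\le\|u_y\|_2$, and $\|\phi_{xxx}\|_2+\|\phi_{xxy}\|_2\le\|u_{xy}\|_2$, so after integrating by parts each piece is controlled by the anisotropic inequality \eqref{ineq} using only $\|\mathbf u\|_2$, $\|\mathbf u_x\|_2$, $\|u_y\|_2$ and $\|u_{xy}\|_2$, and Gronwall closes. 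Your uniqueness sketch is essentially right; note that you do not actually need $\partial_y w_1\in L^\infty_tL^2$ --- the paper uses only the regularity of one fixed solution, namely $\partial_y u_1\in L^\infty_tL^2$ and $\partial_{xy}u_1\in L^2_tL^2$, together with the anisotropic inequality applied to $\int_\Omega v(\partial_y u_1)u$.
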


\smallskip

The next theorem is about large time behavior of solutions. In particular, it states that, as $t\rightarrow \infty$, the $H^2$ norm of the velocity is bounded by a constant dependent on the initial velocity, if we impose a restriction on the forcing term. 

\begin{theorem} [Uniform bound of the $H^2$-norm]     \label{thm-3}
Assume initial velocity $ \mathbf u_0 \in    \mathcal H \cap  H^2(\Omega)$. Moreover, we assume that $\int_0^{\infty} \Big(  \|\mathbf f\|_2    +    \|\emph{curl} \, \mathbf f\|_2 +\|\emph{curl} \, \mathbf f\|_2^2 +  \|\partial_{yy} f_1\|_2^2      \Big) dt \leq C_0$ and $\overline{f_1}=0$. Then equation (\ref{PDE}) has a global strong solution satisfying
\begin{align} \label{H2b}
\|\mathbf u(t)\|_{H^2(\Omega)} \leq  M( \|\mathbf u_0\|_{H^2(\Omega)}, C_0),   \;\;\text{for all}  \;\; t\geq 0,
\end{align}
where $M( \|\mathbf u_0\|_{H^2(\Omega)}, C_0)$ is a constant depending only on $\|\mathbf u_0\|_{H^2(\Omega)}$ and $C_0$.
\end{theorem}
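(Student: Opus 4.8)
The plan is to establish a priori bounds on successively higher-order quantities, in the order $L^2 \to$ vorticity $\to$ second derivatives, and to arrange each estimate so that its right-hand side is time-integrable by virtue of the hypotheses on $\mathbf f$; this is what upgrades the usual (possibly growing) energy bounds into the uniform-in-time bound \eqref{H2b}. Since $\mathbf u_0 \in \mathcal H \cap H^2(\Omega)$ meets the hypotheses of Theorem \ref{thm-1}, that theorem already furnishes the unique global solution, so the real work is to prove the a priori $H^2$ estimate (on Galerkin or smooth approximations) and pass to the limit to obtain a strong solution. Throughout, the only dissipation available is horizontal, controlling $\|\cdot_x\|_2$, so every vertical derivative must be recovered through the divergence-free condition, the boundary condition $v|_{y=0}=v|_{y=1}=0$, and anisotropic interpolation.

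First I would record the bottom two levels. Testing \eqref{PDE} with $\mathbf u$ and discarding the nonnegative dissipation gives $\frac{d}{dt}\|\mathbf u\|_2 \le \|\mathbf f\|_2$, hence $\|\mathbf u(t)\|_2 \le \|\mathbf u_0\|_2 + \int_0^\infty \|\mathbf f\|_2\,dt$ is controlled by the data and $C_0$. Next, taking the curl of \eqref{PDE} produces the scalar vorticity equation $\omega_t - \omega_{xx} + \mathbf u\cdot\nabla\omega = \mathrm{curl}\,\mathbf f$ for $\omega = v_x - u_y$; the transport term is skew by incompressibility, and its boundary contribution $\tfrac12\int_{\partial\Omega}\omega^2(\mathbf u\cdot\mathbf n)$ vanishes since $\mathbf u\cdot\mathbf n=\pm v=0$ on $y=0,1$. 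Thus $\frac{d}{dt}\|\omega\|_2 \le \|\mathrm{curl}\,\mathbf f\|_2$, yielding a uniform bound on $\|\omega(t)\|_2$, and integrating the identity in time also gives $\int_0^\infty \|\omega_x\|_2^2\,dt \le C(\|\omega_0\|_2,C_0)$. Crucially, the divergence-free condition together with $v|_{y=0,1}=0$ gives the exact identity $\|\nabla\mathbf u\|_2 = \|\omega\|_2$ (the cross term $\int u_y v_x$ integrates, with no boundary loss, to $-\|v_y\|_2^2$), so a uniform $H^1$ bound follows at once.

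For the top level I would pass to $\eta := \omega_y$, which, via the relations $\Delta u = -\omega_y$ and $\Delta v = \omega_x$ forced by incompressibility, is equivalent up to elliptic regularity to controlling the full Hessian. Differentiating the vorticity equation in $y$ and testing with $\eta$ yields $\frac12\frac{d}{dt}\|\eta\|_2^2 + \|\eta_x\|_2^2 = -\int (u_y\omega_x + v_y\omega_y)\,\eta + \int (\mathrm{curl}\,\mathbf f)_y\,\eta$, the genuine transport term $\int(\mathbf u\cdot\nabla\eta)\eta$ again vanishing. For the forcing I would use $\nabla\cdot\mathbf f=0$ to write $(\mathrm{curl}\,\mathbf f)_y = -\Delta f_1$, integrate the $\partial_{xx}f_1$ part by parts in $x$ against $\eta_x$ (absorbed by the dissipation) and the $\partial_{yy}f_1$ part by parts in $y$ against $\eta$; the latter is exactly why $\int_0^\infty\|\partial_{yy}f_1\|_2^2\,dt\le C_0$ is assumed, being the only term with no horizontal dissipation to absorb it. The nonlinear terms $\int u_y\omega_x\eta$ and $\int v_y\eta^2 = -\int u_x\eta^2$ are handled by anisotropic Ladyzhenskaya/Agmon inequalities, spending $\|\eta_x\|_2$ and the $H^1$ bound; this produces a differential inequality $\frac{d}{dt}\|\eta\|_2^2 + \|\eta_x\|_2^2 \le \beta(t)\|\eta\|_2^2 + \gamma(t)$ with $\int_0^\infty(\beta+\gamma)\,dt \le C(\|\mathbf u_0\|_{H^2},C_0)$, where $\beta$ absorbs the time-integrable factor $\|\omega_x\|_2^2$ from the previous level and $\gamma$ collects $\|\partial_{yy}f_1\|_2^2$ and $\|\mathrm{curl}\,\mathbf f\|_2^2$. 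Grönwall then bounds $\|\omega_y(t)\|_2$ uniformly, and elliptic regularity recovers $\|\nabla^2\mathbf u(t)\|_2$, completing \eqref{H2b}.

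The main obstacle is this last step, for two reasons. The analytically delicate point is that the vortex-stretching-type terms $\int u_y\omega_x\eta$ and $\int u_x\eta^2$ must be closed using only the horizontal dissipation $\|\eta_x\|_2$, so the choice of anisotropic interpolation (splitting $L^\infty_x L^2_y$ bounds, controlled by $\|\cdot\|_2^{1/2}\|\cdot_x\|_2^{1/2}$, against $L^2_x L^\infty_y$ bounds) is what decides whether the estimate closes at all. The structural point is the $x$-independent mode: horizontal dissipation and the Poincaré inequality see only the oscillating part, so the profile $\overline{u}(y,t) = \int_{\mathbb T} u\,dx$ is undamped, and its evolution $\partial_t\overline{u} = -\partial_y\overline{uv} + \overline{f_1}$ would feed uncontrolled growth unless $\overline{f_1}=0$ — precisely the remaining hypothesis. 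Treating this mode separately, and matching the elliptic recovery of $\nabla^2 u$ to the mere impenetrability condition on the walls (only $v$, not $u$, vanishes there), are where the care lies.
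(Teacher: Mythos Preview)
Your strategy through the $L^2$ and vorticity levels matches the paper exactly, and the overall architecture (anisotropic interpolation, Gr\"onwall with a time-integrable coefficient built from $\|\omega_x\|_2^2$) is the right one. But the top-level estimate has two concrete gaps.

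\textbf{Estimating only $\eta=\omega_y$ does not close the $H^2$ bound.} From $u_{xx}=-v_{xy}$ and the identity $\|\nabla\mathbf u_x\|_2=\|\omega_x\|_2$ you get $\|u_{xx}(t)\|_2\le\|\omega_x(t)\|_2$, and then $u_{yy}=-u_{xx}-\omega_y$ gives $\|u_{yy}(t)\|_2\le\|\omega_x(t)\|_2+\|\omega_y(t)\|_2$. So recovering $\|\nabla^2\mathbf u(t)\|_2$ pointwise in $t$ requires $\sup_t\|\omega_x(t)\|_2<\infty$, which your lower-level estimate does \emph{not} give (it only yields $\int_0^\infty\|\omega_x\|_2^2\,dt<\infty$). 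The paper therefore estimates the full $\|\nabla\omega\|_2^2$, obtaining dissipation $\|\nabla\omega_x\|_2^2=\|\omega_{xx}\|_2^2+\|\omega_{xy}\|_2^2$; the extra $\|\omega_{xx}\|_2^2$ is also what makes the cross term $\int v_x\,\omega_x\,\omega_y$ tractable after integrating by parts in $x$ (one of the two pieces lands on $\omega_{xx}$). With only $\|\eta_x\|_2^2=\|\omega_{xy}\|_2^2$ available, that route is blocked.

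\textbf{The forcing term and the role of $\overline{f_1}=0$.} Your proposed move ``integrate $\partial_{yy}f_1$ by parts in $y$ against $\eta$'' produces $\int \partial_y f_1\,\omega_{yy}$ plus a nonvanishing boundary trace, and $\omega_{yy}$ is nowhere controlled. If instead you bound $\bigl|\int\partial_{yy}f_1\,\omega_y\bigr|\le\|\partial_{yy}f_1\|_2\|\omega_y\|_2$ directly, Young's inequality leaves a term $C\|\omega_y\|_2^2$ with \emph{constant} coefficient, so Gr\"onwall no longer gives a uniform bound. The hypothesis $\overline{f_1}=0$ is precisely what fixes this: it forces $\overline{\partial_{yy}f_1}=0$, hence $\int\partial_{yy}f_1\,\omega_y=\int\partial_{yy}f_1\,\tilde\omega_y$, and Poincar\'e in $x$ gives $\|\tilde\omega_y\|_2\le C\|\omega_{xy}\|_2$, which \emph{is} absorbed by the horizontal dissipation. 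Your explanation that $\overline{f_1}=0$ is needed to control the undamped mode $\bar u$ is a red herring: the uniform bound $\|\mathbf u(t)\|_2\le\|\mathbf u_0\|_2+\int_0^\infty\|\mathbf f\|_2\,ds$ already holds under the assumption $\int_0^\infty\|\mathbf f\|_2\,ds\le C_0$, regardless of $\overline{f_1}$.
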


\smallskip

For any function $g\in L^1(\Omega)$, we denote by
\begin{align} \label{ave}
\bar{g}(y) = \int_0^1 g(x,y) dx,  \qquad \tilde{g}(x,y) = g(x,y)- \bar{g}(y).
\end{align}
The next result is concerned with the stability around the shear flows.

\begin{proposition}[Stability around shear flows $(ay, 0)$]  \label{stability}
Assume $\emph{curl} \, \mathbf f =0$ and initial velocity $ \mathbf u_0 \in    \mathcal H \cap  H^2(\Omega)$. Let $(u+ay, v)$ be a solution of equation (\ref{PDE}) and we denote by 
$\om^*= v_x-u_y$. Then, the oscillation part of $\om^*$ decays exponentially to zero:
\begin{align*}
\|\widetilde{\om^*}(t)\|_2^2  \leq
 e^{- \alpha t}    \|\widetilde{\om^*} (0)\|_2^2,    \;\;   \text{for}  \;\;  t\geq 0,
\end{align*}
where $\alpha>0$, if $\|\omega_0\|_2$ and $\|\nabla \omega_0\|_2$ are sufficiently small 
so that (\ref{om-9}) holds.

\end{proposition}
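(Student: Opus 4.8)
The plan is to pass to a vorticity formulation and exploit the horizontal dissipation together with a Poincar\'e inequality in the $x$-variable. Writing $\mathbf U=(u+ay,v)$ for the full velocity, the shear $ay$ carries the constant vorticity $-a$, so $\omega^*=v_x-u_y$ satisfies the vorticity transport equation obtained by taking the curl of (\ref{PDE}); since $\mathrm{curl}\,\mathbf f=0$, the equation is source-free,
\begin{align*}
\omega^*_t-\omega^*_{xx}+(u+ay)\omega^*_x+v\,\omega^*_y=0.
\end{align*}
Two structural facts will be used repeatedly: incompressibility together with $v|_{y=0}=v|_{y=1}=0$ forces $\overline v\equiv0$, so $v=\widetilde v$; and averaging $u_x+v_y=0$ in $x$ shows that $\overline{\omega^*}=-\overline u_y$ depends only on $y$.

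Next I would split $\omega^*=\overline{\omega^*}+\widetilde{\omega^*}$. Using $\overline v=0$ and $\partial_x\overline{\omega^*}=0$, the advection decomposes as $(\mathbf U\cdot\nabla)\omega^*=v\,\overline{\omega^*}_y+\mathbf U\cdot\nabla\widetilde{\omega^*}$; averaging in $x$ gives the mean equation $\overline{\omega^*}_t+\partial_y\overline{v\widetilde{\omega^*}}=0$, and subtracting it yields the fluctuation equation. Testing the latter against $\widetilde{\omega^*}$ in $L^2(\Omega)$, three simplifications occur: the dissipation produces $\|\widetilde{\omega^*}_x\|_2^2$ (note $\partial_{xx}\overline{\omega^*}=0$); the transport $\mathbf U\cdot\nabla\widetilde{\omega^*}$ contributes nothing because $\nabla\cdot\mathbf U=0$ and the wall flux $V=v$ vanishes at $y=0,1$, making it skew-symmetric; and every term depending on $y$ alone (in particular $\partial_y\overline{v\widetilde{\omega^*}}$) integrates against the zero-$x$-mean function $\widetilde{\omega^*}$ to zero. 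The only surviving nonlinear contribution is the coupling of the fluctuation to the mean-profile curvature, and I arrive at the energy identity
\begin{align*}
\tfrac12\tfrac{d}{dt}\|\widetilde{\omega^*}\|_2^2+\|\widetilde{\omega^*}_x\|_2^2=-\int_\Omega v\,\overline{\omega^*}_y\,\widetilde{\omega^*}\,dx\,dy.
\end{align*}

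To close the estimate I would dominate the right-hand side by the dissipation. Since $v$ has zero $x$-mean, introducing the stream function with $v=-\widetilde\psi_x$ and $-\Delta\widetilde\psi=\widetilde{\omega^*}$ (Dirichlet in $y$) and an elliptic estimate gives $\|v\|_2\le C\|\widetilde{\omega^*}\|_2$, whence $\big|\int_\Omega v\,\overline{\omega^*}_y\,\widetilde{\omega^*}\big|\le C\|\overline{\omega^*}_y\|_{L^\infty_y}\|\widetilde{\omega^*}\|_2^2$. Because $\widetilde{\omega^*}$ has vanishing $x$-average on the unit torus, the Poincar\'e inequality $\|\widetilde{\omega^*}_x\|_2^2\ge 4\pi^2\|\widetilde{\omega^*}\|_2^2$ holds, so the identity becomes $\tfrac{d}{dt}\|\widetilde{\omega^*}\|_2^2+\big(8\pi^2-C\|\overline{\omega^*}_y\|_{L^\infty_y}\big)\|\widetilde{\omega^*}\|_2^2\le0$. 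Provided the smallness of $\|\omega_0\|_2$ and $\|\nabla\omega_0\|_2$, encoded in (\ref{om-9}), keeps $\alpha:=8\pi^2-C\sup_{t\ge0}\|\overline{\omega^*}_y\|_{L^\infty_y}>0$, Gr\"onwall's inequality yields the claimed exponential decay.

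The main obstacle is precisely the uniform-in-time control of the mean-profile curvature $\overline{\omega^*}_y=-\overline u_{yy}$. Because the viscosity is purely horizontal, the mean flow $\overline u(y)$ carries no intrinsic dissipation, so its curvature cannot be expected to decay and must instead be shown to stay below the Poincar\'e threshold for all time. This is where the $H^1$-smallness of the initial vorticity enters: one needs a global a priori bound on $\|\nabla\omega^*(t)\|_2$ (via the uniform $H^2$ control of Theorem \ref{thm-3} together with a continuation/bootstrap argument under (\ref{om-9})) in order to guarantee $C\sup_{t\ge0}\|\overline{\omega^*}_y\|_{L^\infty_y}<8\pi^2$, after which the linear decay mechanism governs.
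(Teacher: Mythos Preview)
Your derivation of the energy identity
\[
\tfrac12\tfrac{d}{dt}\|\widetilde{\omega^*}\|_2^2+\|\widetilde{\omega^*}_x\|_2^2=-\int_\Omega v\,\overline{\omega^*}_y\,\widetilde{\omega^*}\,dx\,dy
\]
and the overall strategy (Poincar\'e in $x$, smallness to absorb the coupling term, Gr\"onwall) are correct and match the paper. The gap is in how you bound the right-hand side. You place $L^\infty_y$ on $\overline{\omega^*}_y=-\overline u_{yy}$ and then assert that a uniform bound on $\|\nabla\omega^*(t)\|_2$ guarantees $\sup_{t\ge0}\|\overline{\omega^*}_y\|_{L^\infty_y}$ is small. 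This implication fails: $\overline{\omega^*}_y$ is a function of $y$ alone, and $\|\nabla\omega^*\|_{L^2(\Omega)}$ only controls $\|\overline{\omega^*}_y\|_{L^2(0,1)}$. Passing to $L^\infty(0,1)$ would require one more $y$-derivative, i.e.\ $\omega^*_{yy}\in L^2$, which is $H^3$ regularity of $\mathbf u$ --- not assumed, and not supplied by Theorem~\ref{thm-3}.

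The paper avoids this by distributing the norms the other way: it puts $L^\infty$ on $v$ and $L^2$ on $\overline{\omega^*}_y$. Using inequality~(\ref{ineq2}) together with $v|_{y=0}=0$ and $v_y=-u_x$, one shows $\|v\|_\infty^2\le C\|(\widetilde{\omega^*})_x\|_2^2$, so the coupling term is bounded by $\epsilon\|(\widetilde{\omega^*})_x\|_2^2+C\|(\overline{\omega^*})_y\|_2^2\|\widetilde{\omega^*}\|_2^2$; then Poincar\'e converts $\|\widetilde{\omega^*}\|_2^2$ back into $\|(\widetilde{\omega^*})_x\|_2^2$. The quantity that must be kept small is now $\|(\overline{\omega^*})_y\|_2^2\le\|\nabla\omega\|_2^2$, which \emph{is} uniformly controlled by (\ref{v-9}), and condition~(\ref{om-9}) is precisely that this bound times the constant is at most $\tfrac12$. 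Your argument can be repaired simply by swapping which factor receives the $L^\infty$ norm.
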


\smallskip

The last result is about the asymptotic behavior of solutions if there is no forcing term. 

\begin{proposition}[Asymptotic solutions]\label{INFTY}
Assume $\mathbf f=0$ and $\mathbf u_0 \in \mathcal H \cap (H^1(\Omega))^2$. Let $\mathbf u= (u, v)$ be the global solutions of (\ref{PDE}). Then,
\begin{eqnarray*}
&&
 \lim_{t\rightarrow \infty} (\tilde{u}, v) = (0,0)  \;\;\text{in} \;\; \mathcal H,  \\
&& \lim_{t\rightarrow \infty} \bar{u} = \phi    \;\;\text{in} \;\;  L^2(0,1),
\end{eqnarray*}
where $(\phi(y),0)$ is a steady state solution of equation (\ref{PDE}).
\end{proposition}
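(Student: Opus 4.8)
The plan is to decompose every quantity into its horizontal average $\bar g(y)=\int_0^1 g\,dx$ and oscillation $\tilde g=g-\bar g$, and to run the argument on two monotone quantities. First, testing \eqref{weak-f} with $\mathbf w=\mathbf u$ (here $\mathbf f=0$) gives $\tfrac12\tfrac{d}{dt}\|\mathbf u\|_2^2=-\|u_x\|_2^2-\|v_x\|_2^2\le 0$, so $\|\mathbf u(t)\|_2$ is non-increasing and $\int_0^\infty(\|u_x\|_2^2+\|v_x\|_2^2)\,dt\le\tfrac12\|\mathbf u_0\|_2^2$. Second, the vorticity $\omega=v_x-u_y$ solves $\omega_t-\omega_{xx}+(\mathbf u\cdot\nabla)\omega=0$, whence $\tfrac12\tfrac{d}{dt}\|\omega\|_2^2=-\|\omega_x\|_2^2\le 0$; combined with the identity $\|\nabla\mathbf u\|_2=\|\omega\|_2$ (valid for divergence-free fields with $v|_{y=0,1}=0$) this yields the crucial uniform-in-time bound $\sup_{t\ge0}\|\nabla\mathbf u(t)\|_2\le\|\omega_0\|_2$. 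I also record that $\bar v\equiv0$ (from $\partial_y\bar v=-\overline{u_x}=0$ and $v|_{y=0}=0$), so $v=\tilde v$, and that averaging the first momentum equation gives the slaved equation $\bar u_t=-\partial_y\overline{\tilde u\,\tilde v}$.

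For the first limit I would show $g(t):=\|\tilde u\|_2^2+\|v\|_2^2\to0$. By the Poincar\'e inequality in $x$, $\|\tilde u\|_2\le\tfrac1{2\pi}\|u_x\|_2$ and $\|v\|_2\le\tfrac1{2\pi}\|v_x\|_2$, so $g\in L^1(0,\infty)$ by the energy bound. Testing the oscillation equations with $(\tilde u,v)$ gives
\[
\tfrac12 g'(t)=-\|\tilde u_x\|_2^2-\|v_x\|_2^2-\int_\Omega \bar u_y\,\tilde u\,\tilde v\,dx\,dy ,
\]
and the coupling term is bounded \emph{uniformly} in $t$: a one-dimensional Agmon inequality in $y$ gives $\|\tilde u\|_{L^\infty_yL^2_x}\lesssim\|\tilde u\|_2^{1/2}(\|\tilde u\|_2+\|u_y\|_2)^{1/2}$, hence
\[
\Big|\int_\Omega \bar u_y\,\tilde u\,\tilde v\Big|\le\|\bar u_y\|_{L^2(0,1)}\,\|\tilde u\|_{L^\infty_yL^2_x}\,\|v\|_2\le C,
\]
using $\|\bar u_y\|_{L^2(0,1)}\le\|u_y\|_2\le\|\omega_0\|_2$ and $\|\mathbf u(t)\|_2\le\|\mathbf u_0\|_2$. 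Thus $g'(t)\le C$ is bounded above, and an elementary lemma (a nonnegative $L^1(0,\infty)$ function with derivative bounded above tends to $0$) forces $g(t)\to0$, i.e. $(\tilde u,v)\to(0,0)$ in $\mathcal H$.

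For the second limit I would prove $\bar u(t)$ converges in $L^2(0,1)$. The uniform bound on $\|\nabla\mathbf u\|_2$ gives $\sup_t\|\bar u(t)\|_{H^1(0,1)}<\infty$, so the orbit $\{\bar u(t)\}$ is precompact in $L^2(0,1)$; the delicate point is uniqueness of the limit, since \emph{every} shear flow $(\phi(y),0)$ is a steady state and the equilibrium set is a continuum, so no isolated-equilibrium argument is available. The key estimate is
\[
\int_0^\infty\|\overline{\tilde u\,\tilde v}\|_{L^1(0,1)}\,dt\le\int_0^\infty\|\tilde u\|_2\|v\|_2\,dt\le\tfrac12\int_0^\infty\big(\|\tilde u\|_2^2+\|v\|_2^2\big)\,dt<\infty .
\]
Integrating $\bar u_t=-\partial_y\overline{\tilde u\,\tilde v}$ then shows $\bar u(t)$ is Cauchy in $W^{-1,1}(0,1)$. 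Combining this with the uniform $H^1(0,1)$ bound through Ehrling's lemma, $\|f\|_{L^2}\le\varepsilon\|f\|_{H^1}+C_\varepsilon\|f\|_{W^{-1,1}}$, upgrades Cauchyness to $L^2(0,1)$, so $\bar u(t)\to\phi$. Since any $(\phi,0)$ with $\phi\in L^2(0,1)$ solves the steady equation (both the nonlinearity and $\partial_{xx}$ vanish, with $p$ constant), $(\phi,0)$ is the asserted steady state, and together with the first limit this gives $\mathbf u(t)\to(\phi,0)$.

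The main obstacle is the second limit: converting precompactness of the orbit into genuine convergence when the equilibria form an entire continuum of shear flows. The resolution hinges on the time-integrability $\int_0^\infty\|\overline{\tilde u\,\tilde v}\|_{L^1}\,dt<\infty$, which makes $\bar u$ Cauchy in a negative-order space and then, via the uniform $H^1$ bound, in $L^2$. A secondary technical point, already needed for the first limit, is the uniform-in-time control of $\|u_y\|_2$; this is precisely what forces the passage through the vorticity bound $\|\omega(t)\|_2\le\|\omega_0\|_2$ and the identity $\|\nabla\mathbf u\|_2=\|\omega\|_2$, rather than working with the energy alone.
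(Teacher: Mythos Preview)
Your argument is correct, but it follows a genuinely different route from the paper's.

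\textbf{Order and coupling of the two limits.} The paper establishes the convergence of $\bar u$ \emph{first}: writing $\bar u_t=-\overline{v\,\tilde u_y}$, it bounds $\|\overline{v\,\tilde u_y}\|_{L^2(0,1)}\le C\|u_{xy}\|_2\|u_x\|_2$ and uses the dissipation of the vorticity equation, $\int_0^\infty\|\omega_x\|_2^2\,dt\le\|\omega_0\|_2^2$, together with the energy bound, to conclude directly that $\bar u$ is Cauchy in $L^2(0,1)$. Only \emph{after} knowing $\|\bar u(t)\|_2\to\|\phi\|_2$ does the paper deduce $(\tilde u,v)\to 0$, via a short Gr\"onwall--type lemma (if $f\ge 0$, $f'+g'+\lambda f\le 0$ and $g\to A$, then $f\to 0$) applied to $f=\|\tilde u\|_2^2+\|\tilde v\|_2^2$ and $g=\|\bar u\|_2^2$. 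You invert this order and fully decouple the two steps: $(\tilde u,v)\to 0$ is obtained independently from $g\in L^1(0,\infty)$ and the one--sided bound $g'\le C$, and the convergence of $\bar u$ follows afterwards by the $W^{-1,1}$--Cauchy plus Ehrling argument.

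\textbf{What each approach buys.} The paper exploits \emph{second--order} information, namely the time--integrability of $\|\nabla\mathbf u_x\|_2^2$ coming from the vorticity dissipation; this yields the $L^2(0,1)$--Cauchy estimate for $\bar u$ in one line, with no interpolation machinery. Your proof, by contrast, uses only the \emph{uniform} bound $\sup_t\|\omega(t)\|_2\le\|\omega_0\|_2$ (hence $\sup_t\|\nabla\mathbf u\|_2<\infty$), never touching $\int_0^\infty\|\omega_x\|_2^2\,dt$; the price is the less direct passage through a negative--order norm and Ehrling's lemma. Your argument is therefore slightly more robust with respect to the regularity used, while the paper's is more elementary once the vorticity dissipation is in hand. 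Both rely on the identity $\|\nabla\mathbf u\|_2=\|\omega\|_2$, and both need the uniform $H^1$ bound to keep the coupling term $\int_\Omega\bar u_y\,\tilde u\,\tilde v$ under control.
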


\bigskip

\section{Preliminaries}

In this section, we provide some useful inequalities and identities for our analysis.

\begin{proposition}
Assume that $f$, $g$, $g_x$, $h$ and $h_y\in L^2(\Omega)$. Then the following inequality holds
\begin{align} \label{ineq}
\int_{\Omega} |f g h| \, dx dy \leq C \|f\|_2       \|g\|_2^{1/2}  \left(\|g\|_2^{1/2} + \|g_x\|_2^{1/2}\right) \|h\|_2^{1/2}   \left(\|h\|_2^{1/2} +  \|h_y\|_2^{1/2}\right).
\end{align}
\end{proposition}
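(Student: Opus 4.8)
The plan is to exploit the anisotropy built into the right-hand side: the factor involving $g$ carries an $x$-derivative while the factor involving $h$ carries a $y$-derivative. Accordingly, I would control $g$ uniformly in the $x$-direction and $h$ uniformly in the $y$-direction, using the one-dimensional Sobolev embedding $H^1\hookrightarrow L^\infty$ applied slice by slice. The asymmetry of the estimate in $g$ and $h$ is exactly what dictates this choice of pairing.

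First I would establish the elementary one-dimensional estimate: for $w\in H^1(I)$, with $I$ being either the torus $\mathbb T$ or the interval $[0,1]$,
\[
\|w\|_{L^\infty(I)}^2 \le C \|w\|_{L^2(I)}\big(\|w\|_{L^2(I)} + \|w'\|_{L^2(I)}\big),
\]
which follows from the identity $w(x)^2 = \int_I w(z)^2\,dz + \int_I\!\int_z^x 2 w w'\,ds\,dz$ together with Cauchy-Schwarz. Applying this in the $x$-variable for each fixed $y$ bounds $G(y) := \sup_x |g(x,y)|$, and applying it in the $y$-variable for each fixed $x$ bounds $H(x) := \sup_y |h(x,y)|$.

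Next I would estimate the integral pointwise by $|g|\le G(y)$ and $|h|\le H(x)$, and then apply Cauchy-Schwarz on $\Omega$, factoring the product $G(y)^2H(x)^2$:
\[
\int_{\Omega} |fgh|\,dx\,dy \le \int_{\Omega} |f|\, G(y) H(x)\,dx\,dy \le \|f\|_2 \Big(\int_0^1 G(y)^2\,dy\Big)^{1/2}\Big(\int_{\mathbb T} H(x)^2\,dx\Big)^{1/2}.
\]
It remains to bound the two one-variable integrals. Integrating the slice estimate for $G$ in $y$ and applying Cauchy-Schwarz to the cross term yields
\[
\int_0^1 G(y)^2\,dy \le C\big(\|g\|_2^2 + \|g\|_2\|g_x\|_2\big) = C\|g\|_2\big(\|g\|_2 + \|g_x\|_2\big),
\]
and taking square roots together with $(a+b)^{1/2}\le a^{1/2}+b^{1/2}$ produces the factor $\|g\|_2^{1/2}\big(\|g\|_2^{1/2} + \|g_x\|_2^{1/2}\big)$. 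The symmetric computation for $H$ gives the corresponding factor in $h$, and combining everything yields the claimed inequality.

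The computations are all elementary, so I expect no serious obstacle; the only points requiring a little care are the measurability of the slice suprema $G(y)$ and $H(x)$ (handled by continuity of the representatives furnished by the $H^1$ embedding) and keeping track of which derivative is paired with which factor, since the estimate is genuinely asymmetric between $g$ and $h$. Beyond this bookkeeping of the repeated Cauchy-Schwarz steps, the argument is direct.
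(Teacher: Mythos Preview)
Your proposal is correct and follows essentially the same approach as the paper: both control $g$ via the one-dimensional Agmon/Sobolev embedding in $x$ and $h$ via the embedding in $y$, then combine with Cauchy--Schwarz. The only cosmetic difference is ordering---the paper applies H\"older in $x$ first and then handles the $y$-supremum of $\int h^2\,dx$, whereas you take both slice suprema at once and exploit the tensor structure $G(y)H(x)$---but the ingredients and the pairing of derivatives with factors are identical.
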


\begin{proof}
Using H\"older's inequality and Agmon's inequality, we deduce
\begin{align*}
&\int_{\Omega} |f g h| dx dy \\
& \leq \int_0^1  \Big(\int_0^1 f^2 dx\Big)^{1/2}  \Big(\int_0^1 h^2 dx\Big)^{1/2}  \Big(\sup_{x\in [0,1]} |g| \Big) dy \\
&\leq  C\int_0^1  \Big(\int_0^1 f^2 dx\Big)^{1/2}  \Big(\int_0^1 h^2 dx\Big)^{1/2}  \Big(\int_0^1 g^2 dx\Big)^{1/4}  \Big(\int_0^1 (g^2 + g_x^2) dx\Big)^{1/4} dy \\
&\leq C\|f\|_2 \|g\|_2^{1/2}   \left(\|g\|_2^{1/2} + \|g_x\|_2^{1/2} \right) \sup_{y\in [0,1]} \Big( \int_0^1 h^2 dx \Big)^{1/2} \\
&\leq C\|f\|_2 \|g\|_2^{1/2}       \left(  \|g\|_2^{1/2} + \|g_x\|_2^{1/2}   \right)     \Big(\int_0^1     \big(\int_0^1 h^2 dy\big)^{1/2}  \big(\int_0^1 (h^2 + h_y^2) dy\big)^{1/2}   dx \Big)^{1/2}\\
&\leq C  \|f\|_2 \|g\|_2^{1/2}   \left( \|g\|_2^{1/2} + \|g_x\|_2^{1/2} \right) \|h\|_2^{1/2} \left(\|h\|_2^{1/2} + \|h_y\|_2^{1/2}\right).
\end{align*}
\end{proof}

\smallskip

\begin{proposition}
Assume that $f$, $f_x$, $f_y$ and $f_{xy}\in L^1(\Omega)$. Then the following inequality holds
\begin{align} \label{ineq2}
\|f\|_{\infty} \leq \|f\|_1 + \|f_x\|_1 + \|f_y\|_1 + \|f_{xy}\|_1.
\end{align}
\end{proposition}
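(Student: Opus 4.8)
The plan is to prove a pointwise representation formula for $f$ and then integrate it over the unit-area fundamental domain of $\Omega=\mathbb{T}\times[0,1]$. First I would establish, for smooth $f$, the elementary identity
\[
f(x_0,y_0) = f(x,y) + \int_x^{x_0} f_s(s,y)\,ds + \int_y^{y_0} f_t(x,t)\,dt + \int_x^{x_0}\!\!\int_y^{y_0} f_{st}(s,t)\,dt\,ds,
\]
valid for every $(x,y)$ and every fixed $(x_0,y_0)$. This follows by applying the fundamental theorem of calculus first in $x$ and then in $y$: the mixed double integral is exactly the discrepancy $f(x_0,y_0)-f(x_0,y)-f(x,y_0)+f(x,y)$, while the two single integrals account for $f(x_0,y)-f(x,y)$ and $f(x,y_0)-f(x,y)$.

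Next I would integrate both sides in $(x,y)$ over the fundamental domain $[0,1]^2$, whose area is $1$. Since the left-hand side is constant in $(x,y)$, it is unchanged, and the right-hand side becomes the average $\int_\Omega f\,dx\,dy$ plus three correction integrals. I would then bound each of the four resulting terms in absolute value. Because both integration variables $s$ and $t$ range within $[0,1]$, each inner integral is dominated by the corresponding full integral, e.g. $\big|\int_x^{x_0} f_s(s,y)\,ds\big| \le \int_0^1 |f_s(s,y)|\,ds$, and the remaining outer integration over $(x,y)$ contributes only the factor $|\Omega|=1$. This yields the four bounds $\|f\|_1$, $\|f_x\|_1$, $\|f_y\|_1$, and $\|f_{xy}\|_1$, so that $|f(x_0,y_0)| \le \|f\|_1 + \|f_x\|_1 + \|f_y\|_1 + \|f_{xy}\|_1$. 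Taking the supremum over $(x_0,y_0)$ gives (\ref{ineq2}). Note that periodicity in $x$ plays no essential role beyond fixing the fundamental domain; the argument is really the anisotropic embedding $W^{1,1}_xW^{1,1}_y \hookrightarrow L^\infty$ on a unit square.

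Finally, I would remove the smoothness hypothesis by a standard density argument: approximate $f$ by smooth functions $f_n$ with $f_n, \partial_x f_n, \partial_y f_n, \partial_{xy} f_n \to f, f_x, f_y, f_{xy}$ in $L^1(\Omega)$, apply the inequality to each $f_n$, and pass to the limit. The uniform $L^\infty$ bound on the $f_n$ also identifies the continuous representative of $f$ for which $\|f\|_\infty$ is realized pointwise. I expect no genuine obstacle in the computation itself, which is elementary; the only point requiring a little care is this approximation step — verifying that the class $f,f_x,f_y,f_{xy}\in L^1(\Omega)$ admits such smooth approximants and that the estimate is being applied to the correct (continuous) representative.
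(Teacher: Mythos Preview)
Your proof is correct and follows essentially the same approach as the paper: both iterate the fundamental theorem of calculus in $x$ and $y$ to bound $|f(x_0,y_0)|$ by the four $L^1$ norms, and then pass to general $f$ by density. The only cosmetic difference is that the paper selects the base point via the mean value theorem (so that $f$ there equals its average), whereas you average the representation formula over all base points; these are equivalent devices for producing the $\|f\|_1$ term.
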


\begin{proof}
For any continuous function $\phi(s)$ on $[0,1]$, there exists $s_0\in [0,1]$ such that $\phi(s_0) = \int_0^1 \phi(s) ds$.
Hence, for every $s\in [0,1]$, one has $|\phi(s)| = \left|\int_{s_0}^s \phi'(\xi) d\xi  + \phi(s_0) \right| \leq \int_0^1 |\phi'(\xi)| d\xi + \int_0^1 |\phi(\xi)| d\xi$ if $\phi$ is $C^1$.
Therefore, for any $C^2$ function $f(x,y)$ on $\Omega$, and for all $(x,y) \in \Omega$, we have
\begin{align}
&|f(x,y)|  \leq  \int_0^1 \left|f_x(x, y)\right| dx + \int_0^1 |f(x,y)| dx  \notag\\
& \leq  \int_0^1 \Big( \int_0^1 |f_{xy}(x,y)| dy + \int_0^1 |f_{x}(x,y)| dy \Big) dx + \int_0^1 \Big( \int_0^1 |f_{y}(x,y)| dy + \int_0^1 |f(x,y)| dy \Big) dx \notag\\
& \leq  \|f_{xy}\|_1 +  \|f_{x}\|_1 +  \|f_{y}\|_1 + \|f\|_1.
\end{align}
Thus, $\|f\|_{\infty} \leq \|f_{xy}\|_1 +  \|f_{x}\|_1 +  \|f_{y}\|_1 + \|f\|_1$ if $f\in C^2(\Omega)$. 
Finally, (\ref{ineq2}) can be obtained by a standard density argument using mollifiers.
\end{proof}

\smallskip

Assume that vector fields $\mathbf u(x,y) =(u(x,y),v(x,y))$ and $\mathbf w(x,y)$ are periodic in $x \in \mathbb T$. Also, assume that $v|_{y=0}=v|_{y=1}=0$. 
Moreover, assume that $\nabla \cdot \mathbf u = u_x + v_y =0$.
Then, for sufficiently regular $\mathbf u$ and $\mathbf w$, it is easy to verify that
\begin{align} \label{vanish}
\int_{\Omega} [(\mathbf u \cdot \nabla) \mathbf w] \cdot \mathbf w dx dy =0.
\end{align}

Assume $v,v_y \in L^2(\Omega)$ and $v|_{y=0}=0$. Then we have the following Poinc\'are-type inequality:
\begin{align}  \label{Poin}
\|v\|_2^2 \leq \|v_y\|_2^2.
\end{align}
In fact, since $v|_{y=0}=0$, then
\begin{align*}
&\|v\|_2^2 = \int_{\Omega} v^2 dy dx =   \int_{\Omega} \Big(\int_0^y v_{\xi}(x,\xi) d\xi \Big)^2 dy dx
\leq \int_0^1 \Big(\int_0^1 |v_{\xi}(x,\xi)| d\xi \Big)^2 dx \leq  \|v_y\|_2^2.
\end{align*}

\bigskip

\section{Global well-posedness of weak solutions}
Assume initial data $\mathbf u_0 =(u_0,v_0) \in \mathcal H$ and $\partial_y u_0 \in L^2(\Omega)$. We show the global well-posedness of weak solutions, stated in Theorem \ref{thm-1}.

\subsection{Global existence of weak solutions}

We shall first conduct \emph{a priori} estimates, and then use them to justify the global existence of weak solutions rigorously in Subsection \ref{rigor} via an approximation scheme. 

\subsubsection{Estimate for $\|\mathbf u\|_2$}  \label{L2est}
Take the inner product of equation (\ref{PDE}) with $\mathbf u$. Using equality (\ref{vanish}), we obtain
\begin{align}  \label{e-0}
\frac{1}{2}\frac{d}{dt}\|\mathbf u\|_2^2 +  \|\mathbf u_x\|_2^2 =  \int_{\Omega} \mathbf f \cdot  \mathbf u dx dy \leq \|\mathbf f\|_2 \|\mathbf u\|_2.
\end{align}
Thus, $\|\mathbf u\|_2 \Big(\frac{d}{dt} \|\mathbf u\|_2 \Big) \leq \|\mathbf f\|_2 \|\mathbf u\|_2$. Therefore, 
$\frac{d}{dt} \|\mathbf u\|_2 \leq  \|\mathbf f\|_2$. It follows that
\begin{align}  \label{e-1}
\|\mathbf u(t)\|_2 \leq \|\mathbf u_0\|_2 + \int_0^t  \|\mathbf f\|_2 ds, \;\;\; t\geq 0.
\end{align}
By integrating (\ref{e-0}) over $[0,t]$ and using (\ref{e-1}), we obtain, for all $t\geq 0$,
\begin{align}  \label{e-2}
&\int_0^t  \|\mathbf u_x\|_2^2 ds \leq \frac{1}{2} \|\mathbf u_0\|_2^2 +  \int_0^t  \|\mathbf f\|_2 \|\mathbf u\|_2 ds  \notag\\
&\leq  \frac{1}{2} \|\mathbf u_0\|_2^2 +  \Big(\int_0^t  \|\mathbf f\|_2 ds\Big)  \Big(\|\mathbf u_0\|_2 + \int_0^t  \|\mathbf f\|_2 ds\Big)
\leq \|\mathbf u_0\|_2^2 + 2 \Big(\int_0^t  \|\mathbf f\|_2 ds\Big)^2.
\end{align}

\smallskip

\subsubsection{Estimate for $\|u_y\|_2$} \label{uyest}
Equation (\ref{PDE}) can be written as a system of two coupled equations:
\begin{align} 
& u_t - u_{xx} + u u_x + v u_y + p_x = f_1   \label{be-3}   \\
& v_t - v_{xx} + u v_x + v v_y + p_y = f_2.       \label{be-3'}
\end{align}
Differentiate (\ref{be-3}) with respect to $y$, and take the inner product with $u_y$. Then
\begin{align} \label{be-4}
&\frac{1}{2} \frac{d}{dt} \|u_{y}\|_2^2 + \|u_{xy}\|_2^2 = - \int_{\Omega} p_{xy} u_y dx dy +   \int_{\Omega} (\partial_y f_1) u_y dx dy \notag\\
&= \int_{\Omega} p_y u_{xy} dx dy +   \int_{\Omega} (\partial_y f_1) u_y dx dy.
\end{align}

Differentiate (\ref{be-3}) and (\ref{be-3'}) by $x$ and $y$ respectively, and add the results. Then, we use the divergence free condition that $u_x + v_y=0$ and $\partial_x f_1 + \partial_y f_2 =0$
to get
\begin{align} \label{Delta-p}
-\Delta p = (u^2)_{xx} + (v^2)_{yy} + 2 (uv)_{xy}.
\end{align}
Moreover, using (\ref{be-3'}) as well as the boundary condition that $v|_{y=0} = v|_{y=1} =0$ and the assumption that $f_2|_{y=0} =f_2|_{y=1}=0$, we get the boundary condition for the pressure $p$:
\begin{align} \label{bd-p}
p_y|_{y=0}=p_y|_{y=1}=0.
\end{align}

Let $\phi$ be the solution of the Poisson equation:
\begin{align}
&-\Delta \phi = u_y, \label{PHI-1} \\
&\phi|_{y=0}=\phi|_{y=1}=0. \label{PHI-2}
\end{align}

Using integration by parts, we have
\begin{align*}
&\int_{\Omega} (\Delta p_y )  \phi_{x} dx dy = \int_{\Omega} ( p_{xxy} + p_{yyy})  \phi_{x} dx dy  \\
& =  \int_{\Omega} p_y  (\Delta \phi_{x})  dx dy   + \int_{\Omega}  \left[p_{yy}  \phi_{x} \right]_{y=0}^{y=1} dx  -
\int_{\Omega} \left[ p_{y} \phi_{xy} \right]_{y=0}^{y=1} dx   \\
& =  \int_{\Omega} p_y  (\Delta \phi_{x})  dx dy = -\int_{\Omega} p_y u_{xy} dx dy,
\end{align*}
where we have used the boundary values (\ref{PHI-2}) and (\ref{bd-p}). Therefore,
\begin{align}\label{main}
&\int_{\Omega} p_y u_{xy} dx dy =  \int_{\Omega} (-\Delta p_y )  \phi_{x} dx dy    =  \int_{\Omega} [ (u^2)_{xxy} + (v^2)_{yyy} + 2 (uv)_{xyy}] \phi_{x} dx dy,
\end{align}
where the last equality is due to (\ref{Delta-p}).

Because of (\ref{be-4}) and (\ref{main}), we have
\begin{align}  \label{MAIN}
\frac{1}{2} \frac{d}{dt} \|u_{y}\|_2^2 + \|u_{xy}\|_2^2 = \int_{\Omega} [ (u^2)_{xxy} + (v^2)_{yyy} + 2 (uv)_{xyy}] \phi_{x} dx dy +   \int_{\Omega} (\partial_y f_1) u_y dx dy.
\end{align}

Before evaluating (\ref{MAIN}), let us have some estimates on $\phi$. Take $L^2-$inner product of (\ref{PHI-1}) with $\phi$ and use
boundary condition (\ref{PHI-2}) to get

\begin{align*}
\|\phi_{x}\|_2^{2}   +   \|\phi_{y}\|_2^2 =  \int_{\Omega} u_y \phi dx dy  = - \int_{\Omega} u \phi_y dx dy \leq \|u \|_2\|\phi_{y}\|_2.
\end{align*}
By the Cauchy--Schwarz inequality we get
\begin{align}  \label{PHI-L2}
& \|\phi_{x}\|_2^{2}   +   \|\phi_{y}\|_2^2 \leq \|u \|_2^2.
\end{align}

Take $L^2-$norm of both sides of (\ref{PHI-1}) and use boundary condition (\ref{PHI-2}) to get
\begin{align} \label{PHI-H2}
& \|\phi_{xx}\|_2^{2}   +   \|\phi_{yy}\|_2^2 + 2  \int_{\Omega} \phi_{xx} \phi_{yy} dx dy
= \|\phi_{xx}\|_2^{2}   +   \|\phi_{yy}\|_2^2 + 2  \|\phi_{xy}\|_2^2 =  \|u_y \|_2^2.
\end{align}

Also, take $L^2-$inner product of (\ref{PHI-1}) with $\phi_{xxxx}$  and use
boundary condition (\ref{PHI-2}) to get
\begin{align*} 
\|\phi_{xxx}\|_2^{2}   +   \|\phi_{xxy}\|_2^2 =   \int_{\Omega} u_y \phi_{xxxx} dx dy  = - \int_{\Omega} u_{xy} \phi_{xxx} dx dy \leq \|u_{xy} \|_2\|\phi_{xxx}\|_2.
\end{align*}
Using the Cauchy--Schwarz inequality gives
\begin{align}  \label{PHI-H3}
\| \phi_{xxx} \|_2^2   +   \| \phi_{xxy} \|_2^2  \leq  \| u_{xy} \|_2^2.
\end{align}

Now we deal with the right-hand side of (\ref{MAIN}) term by term. First we look at
\begin{align}     \label{lo-1}
 \int_{\Omega}  (u^2)_{xxy}  \phi_{x} dx dy = - 2 \int_{\Omega} u u_{xy} \phi_{xx} dx dy
 -2 \int_{\Omega} u_x u_y \phi_{xx} dx dy.
\end{align}
Applying inequality (\ref{ineq}) and estimate (\ref{PHI-H2}) \& (\ref{PHI-H3}), we evaluate
\begin{align}  \label{be-5}
&\left| \int_{\Omega} u u_{xy} \phi_{xx} dx dy  \right| \notag\\
&\leq C  \|u\|_2^{1/2}  \left( \|u\|_2^{1/2} + \|u_x\|_2^{1/2} \right) \|u_{xy}\|_2  \|\phi_{xx}\|_2^{1/2} \left( \|\phi_{xx}\|_2^{1/2}   +   \|\phi_{xxy}\|_2^{1/2} \right) \notag\\
&\leq C  \|u\|_2^{1/2}  \left( \|u\|_2^{1/2} + \|u_x\|_2^{1/2} \right) \|u_{xy}\|_2  \|u_y\|_2^{1/2} \left( \|u_y\|_2^{1/2}   +   \|u_{xy}\|_2^{1/2} \right) \notag\\
& =  C  \|u\|_2^{1/2}   \|u_x\|_2^{1/2}      \|u_{xy}\|_2^{3/2}   \|u_y\|_2^{1/2}  +       C  \|u\|_2      \|u_{xy}\|_2^{3/2}   \|u_y\|_2^{1/2}     \notag\\
& \hspace{0.2 in} + C  \|u\|_2^{1/2}   \|u_x\|_2^{1/2}      \|u_{xy}\|_2  \|u_y\|_2   + C    \|u\|_2     \|u_{xy}\|_2  \|u_y\|_2  \notag\\
&\leq  \epsilon \|u_{xy}\|_2^2 + C\|u\|_2^2 \|u_x\|_2^2  \|u_y\|_2^2 + C\|u\|_2^4 \|u_y\|_2^2 + C\|u\|_2^2  \|u_y\|_2^2 + C\|u_x\|_2^2  \|u_y\|_2^2.
\end{align}
Also,
\begin{align} \label{be-6}
&\left|\int_{\Omega} u_x u_y \phi_{xx} dx dy \right| \notag\\
&\leq C \|u_x\|_2  \|u_y\|_2^{1/2}  \left( \|u_y\|_2^{1/2} +  \|u_{xy}\|_2^{1/2} \right)   \|\phi_{xx}\|_2^{1/2} \left (  \|\phi_{xx}\|_2^{1/2} +  \|\phi_{xxy}\|_2^{1/2} \right)  \notag\\
&\leq C \|u_x\|_2  \|u_y\|_2^{1/2}  \left( \|u_y\|_2^{1/2} +  \|u_{xy}\|_2^{1/2} \right)   \|u_y\|_2^{1/2} \left (  \|u_y\|_2^{1/2} +  \|u_{xy}\|_2^{1/2} \right)  \notag\\
&\leq  C \|u_x\|_2  \|u_y\|_2  \left (  \|u_y\|_2+  \|u_{xy}\|_2\right)  \notag\\
&\leq C \|u_x\|_2  \|u_y\|_2  \|u_{xy}\|_2 + C \|u_x\|_2  \|u_y\|_2^2
\notag\\
&\leq \epsilon  \|u_{xy}\|_2^2 + C \|u_x\|_2^2  \|u_y\|_2^2 + C \|u_y\|_2^2.
\end{align}

Combining (\ref{lo-1}), (\ref{be-5}) and (\ref{be-6}) yields
\begin{align}  \label{be-7}
\left|\int_{\Omega}  (u^2)_{xxy}  \phi_{x} dx dy \right| \leq  \epsilon \|u_{xy}\|_2^2 + C  \|u_y\|_2^2   \left(\|u\|_2^2 \|u_x\|_2^2   +  \|u\|_2^4   + \|u_x\|_2^2   + 1\right).
\end{align}

Next, we consider the integral

\begin{align} \label{be-8}
&\int_{\Omega}  (uv)_{xyy} \phi_{x} dx dy
 =   \int_{\Omega} ( -u u_x + u_y v ) \phi_{xxy} dx dy.
\end{align}
Here,
\begin{align}    \label{be-9}
 &\left|\int_{\Omega} u_y v \phi_{xxy} dx dy \right|\notag\\
 &\leq C \|u_y\|_2^{1/2} (\|u_y\|_2^{1/2} +  \|u_{xy}\|_2^{1/2}   )   \|v\|_2^{1/2} (\|v\|_2^{1/2} +  \|v_y\|_2^{1/2}   ) \|\phi_{xxy}\|_2 \notag\\
 &\leq C \|u_y\|_2^{1/2} (\|u_y\|_2^{1/2} +  \|u_{xy}\|_2^{1/2}   )   \|v\|_2^{1/2} (\|v\|_2^{1/2} +  \|v_y\|_2^{1/2}   ) \|u_{xy}\|_2 \notag\\
 &\leq C  \|u_y\|_2^{1/2} \|v\|_2^{1/2}  \|v_y\|_2^{1/2}   \|u_{xy}\|_2^{3/2} + C  \|u_y\|_2^{1/2} \|v\|_2  \|u_{xy}\|_2^{3/2}  \notag\\
 & \hspace{0.2 in} +  C  \|u_y\|_2 \|v\|_2^{1/2}  \|v_y\|_2^{1/2}   \|u_{xy}\|_2 +   C  \|u_y\|_2 \|v\|_2  \|u_{xy}\|_2  \notag\\
 &\leq  \epsilon \|u_{xy}\|_2^2 + C\left(\|u_y\|_2^2 \|v\|_2^2  \|u_x\|_2^2 + \|u_y\|_2^2 \|v\|_2^4 +   \|u_y\|_2^2 \|u_x\|_2^2 +  \|u_y\|_2^2 \|v\|_2^2\right). 
  \end{align}
Also,
\begin{align}     \label{be-10}
&\left|\int_{\Omega} u u_x \phi_{xxy} dx dy\right|    =   \frac{1}{2} \left|\int_{\Omega} (u^2)_x \phi_{xxy} dx dy\right|    =      \frac{1}{2} \left|\int_{\Omega} (u^2)_{xxy} \phi_{x} dx dy\right|    \notag\\
 &\leq  \epsilon \|u_{xy}\|_2^2 + C  \|u_y\|_2^2   \left(\|u\|_2^2 \|u_x\|_2^2   +  \|u\|_2^4   + \|u_x\|_2^2   + 1\right),
 \end{align}
 due to (\ref{be-7}).

Combining (\ref{be-8})-(\ref{be-10}) yields
\begin{align}  \label{be-11}
\left|\int_{\Omega}  (uv)_{xyy} \phi_{x} dx dy \right| \leq  \epsilon \|u_{xy}\|_2^2 + C  \|u_y\|_2^2   \left(\|\mathbf u\|_2^2 \|u_x\|_2^2   +  \|\mathbf u\|_2^4   + \|u_x\|_2^2   + 1\right).
\end{align}

Moreover,
\begin{align} \label{be-12}
&\int_{\Omega} [  (v^2)_{yyy}] \phi_{x} dx dy
= 2 \int_{\Omega}                      (v v_y)_{yy}   \phi_{x} dx dy  \notag\\
& =  -2 \int_{\Omega}                      (v u_x)_{yy}  \phi_{x} dx dy
= -2  \int_{\Omega}                      (v_y u_x + v u_{xy})_{y}  \phi_{x} dx dy.
\end{align}

Notice that
\begin{align} \label{be-13}
&     \left|  \int_{\Omega}                      (v_y u_x)_y   \phi_{x} dx dy       \right| =      \left|  \int_{\Omega}                      (u_x^2)_y   \phi_{x} dx dy       \right|     =     2  \left|\int_{\Omega}         u_x u_{xy}         \phi_{x} dx dy  \right| \notag\\
&\leq C \|u_x\|_2^{1/2}  \left(   \|u_x\|_2^{1/2}  +     \|u_{xy}\|_2^{1/2} \right) \|u_{xy}\|_2 \|\phi_{x}\|_2^{1/2}  \left(  \|\phi_{x}\|_2^{1/2}  +   \|\phi_{xx}\|_2^{1/2}      \right)  \notag\\
&\leq C \|u_x\|_2^{1/2}  \left(   \|u_x\|_2^{1/2}  +     \|u_{xy}\|_2^{1/2} \right) \|u_{xy}\|_2 \|u\|_2^{1/2}  \left(  \|u\|_2^{1/2} +   \|u_y\|_2^{1/2}      \right) \notag\\
&\leq  C \|u_x\|_2^{1/2} \|u_{xy}\|_2^{3/2}  \|u\|_2^{1/2}  \|u_y\|_2^{1/2}  +  C \|u_x\|_2^{1/2} \|u_{xy}\|_2^{3/2}  \|u\|_2 \notag\\
& \hspace{0.2 in} + C \|u_x\|_2 \|u_{xy}\|_2 \|u\|_2^{1/2}  \|u_y\|_2^{1/2} + C \|u_x\|_2 \|u_{xy}\|_2 \|u\|_2      \notag\\
&\leq \epsilon \|u_{xy}\|_2^2 + C \|u_x\|_2^2   \|u\|_2^2 \|u_y\|_2^2 + C \|u_x\|_2^2   \|u\|_2^4 + C \|u_x\|_2^2 \|u_y\|_2^2 + C \|u_x\|_2^2 \|u\|_2^2.
\end{align}
Moreover,
\begin{align}     \label{be-14}
&\left|  \int_{\Omega}                      (v u_{xy})_{y}  \phi_{x} dx dy \right| = \left|\int_{\Omega}    v u_{xy}      \phi_{xy} dx dy  \right|     \notag\\
&\leq C \|v\|_2^{1/2} (\|v\|_2^{1/2} +  \|v_y\|_2^{1/2}  )  \|u_{xy}\|_2   \|\phi_{xy}\|_2^{1/2}  (   \|\phi_{xy}\|_2^{1/2}   +  \|\phi_{xxy}\|_2^{1/2}       )   \notag\\
&\leq C \|v\|_2^{1/2} (\|v\|_2^{1/2} +  \|v_y\|_2^{1/2}  )  \|u_{xy}\|_2   \|u_{y}\|_2^{1/2}  (   \|u_{y}\|_2^{1/2}   +  \|u_{xy}\|_2^{1/2}       )  \notag\\
&\leq  C  \|v\|_2^{1/2} \|v_y\|_2^{1/2}  \|u_y\|_2^{1/2}  \|u_{xy}\|_2^{3/2} + C \|v\|_2 \|u_y\|_2^{1/2}  \|u_{xy}\|_2^{3/2}  \notag\\
& \hspace{0.2 in}  + C  \|v\|_2^{1/2} \|v_y\|_2^{1/2}  \|u_y\|_2 \|u_{xy}\|_2 + C  \|v\|_2   \|u_y\|_2 \|u_{xy}\|_2    \notag\\
&\leq \epsilon \|u_{xy}\|_2^2 + C\left( \|v\|_2^2 \|u_x\|_2^2  \|u_y\|_2^2 +   \|v\|_2^4 \|u_y\|_2^2  +  \|u_x\|_2^2  \|u_y\|_2^2 +     \|v\|_2^2  \|u_y\|_2^2 \right).
\end{align}

Combining (\ref{be-12})-(\ref{be-14}) yields
\begin{align}  \label{be-15}
&\left|\int_{\Omega}   (v^2)_{yyy} \phi_{x} dx dy  \right| \notag\\
&\leq  \epsilon \|u_{xy}\|_2^2 + C  \|u_y\|_2^2 \left(   \|\mathbf u\|_2^2 \|u_x\|_2^2 +  \|u_x\|_2^2      +     \|v\|_2^4 +    \|v\|_2^2   \right) + C \|u_x\|_2^2 ( \|u\|_2^4 +  \|u\|_2^2    ).
\end{align}

Now, due to (\ref{MAIN}), (\ref{be-7}), (\ref{be-11}) and (\ref{be-15}), we obtain
\begin{align} \label{e-20}
\frac{d}{dt} \|u_{y}\|_2^2 + \|u_{xy}\|_2^2 &\leq  C  \|u_y\|_2^2   \left(\|\mathbf u\|_2^2 \|u_x\|_2^2   +  \|\mathbf u\|_2^4   + \|u_x\|_2^2   + 1\right) \notag\\
 &+     C \|u_x\|_2^2 ( \|u\|_2^4 +  \|u\|_2^2    ) + \|\partial_y f_1\|_2^2.
\end{align}
Therefore, for all $t\geq 0$, 
\begin{align} \label{e-21}
\|u_{y}(t)\|_2^2 &\leq  e^{C\int_0^t   \left(\|\mathbf u\|_2^2 \|u_x\|_2^2   +  \|\mathbf u\|_2^4   + \|u_x\|_2^2   + 1\right) ds    }    \notag\\
& \times \Big(   \|\partial_y u_0\|_2^2 + C\int_0^t   \|u_x\|_2^2 ( \|u\|_2^4 +  \|u\|_2^2 ) ds  + \int_0^t    \|\partial_y f_1\|_2^2 ds   \Big).
\end{align}
For every $t>0$, the right-hand side of (\ref{e-21}) is finite and has an upper bound depending on $\|\mathbf u_0\|_2$, $\|\partial_y u_0\|_2$, $\int_0^t \|\mathbf f\|_2 ds$, $\int_0^t \|\partial_y f_1\|_2^2 ds$
and the time $t$.

Due to (\ref{e-20}) and (\ref{e-21}), we get, for all $t\geq 0$,
\begin{align} \label{e-22}
\int_0^t  \|u_{xy}\|_2^2 ds \leq C(\|\mathbf u_0\|_2,   \|\partial_y u_0\|_2,  \|\mathbf f\|_2,  \|\partial_y f_1\|_2, t  ).
\end{align}

\smallskip

\subsubsection{Rigorous justification of existence of weak solutions} \label{rigor}
Assume initial velocity $\mathbf u_0 = (u_0,v_0) \in \mathcal H$ with $\partial_y u_0 \in L^2(\Omega)$. According to space $\mathcal H$ defined in (\ref{space}), we write 
$u_0=\sum_{\mathbf k \in \mathbb Z^2} a_{\mathbf k} e^{2 i \pi \mathbf k \cdot \mathbf x}$
and 
$v_0=\sum_{k_1 \in \mathbb Z, \, k_2\in \mathbb Z^+} b_{\mathbf k} e^{2i \pi k_1 x} \sin (\pi k_2 y)$. We consider a sequence of initial data $\mathbf u_{0,m} = (u_{0,m}, v_{0,m})$ defined as
\begin{align} \label{square-p}
u_{0,m}=\sum_{   \substack{ |k_1| \leq m \\ |k_2| \leq m} } a_{\mathbf k} e^{2 i \pi \mathbf k \cdot \mathbf x},     \;\;  v_{0,m}=\sum_{    \substack{  |k_1|\leq m \\ 1 \leq k_2 \leq m}} b_{\mathbf k} e^{2i \pi k_1 x} \sin (\pi k_2 y).
\end{align}
It follows that $\mathbf u_{0,m} \rightarrow \mathbf u_0$ in $L^2(\Omega)$ and $\partial_y (u_{0,m}) \rightarrow \partial_y u_0$ in $L^2(\Omega)$.

Also, assume that $\mathbf f = (f_1,f_2)  \in L^1(0,T; \mathcal H)$ with $\partial_y f_1\in L^2(0,T; L^2(\Omega))$ for any $T>0$. Then, we can write $f_1=\sum_{\mathbf k \in \mathbb Z^2} \alpha_{\mathbf k}(t) e^{2 i \pi \mathbf k \cdot \mathbf x}$
and 
$f_2=\sum_{k_1 \in \mathbb Z, \, k_2\in \mathbb Z^+} \beta_{\mathbf k}(t) e^{2i \pi k_1 x} \sin (\pi k_2 y)$. We consider a sequence of external force $\mathbf f_m = (f_{1,m}, f_{2,m})$ defined as
\begin{align} 
f_{1,m}=\sum_{   \substack{ |k_1| \leq m \\ |k_2| \leq m} } \alpha_{\mathbf k}(t) e^{2 i \pi \mathbf k \cdot \mathbf x},     \;\;  f_{2,m}=\sum_{    \substack{  |k_1|\leq m \\ 1 \leq k_2 \leq m}} \beta_{\mathbf k}(t) e^{2i \pi k_1 x} \sin (\pi k_2 y).
\end{align}
It follows that $\mathbf f_m \rightarrow \mathbf f$ in $L^1(0,T; \mathcal H)$.

Under initial condition $\mathbf u_m(0)=\mathbf u_{0,m}$ and external force $\mathbf f_m$, we consider the equation 
\begin{align}  \label{Gal}
\partial_t \mathbf u_m-  \partial_{xx}\mathbf u_m + (\mathbf u_m\cdot \nabla)\mathbf u_m  + \nabla p_m  = \mathbf f_m(x,y,t), 
\end{align}
where $\nabla \cdot \mathbf u_m =0$. Since $\mathbf u_{0,m}$ and $\mathbf f_m$ have only finitely many Fourier modes, they are real analytic functions. 
We remark that equation (\ref{Gal}) is more regular than 2D Euler equations, and therefore the global well-posedness of 2D Euler equations with analytic initial data and analytic forcing terms also holds for equation (\ref{Gal}). 
Thus, we obtain that equation (\ref{Gal}) has a global analytic solution $\mathbf u_m$ for each positive integer $m$. Furthermore, all the \emph{a priori} estimates conducted in subsections \ref{L2est} and \ref{uyest} are valid for $\mathbf u_m$.

Let us fix an arbitrary $T>0$.  Due to \emph{a priori} estimates (\ref{e-1})-(\ref{e-2}) and (\ref{e-21}), we have
\begin{align}
&\|\mathbf u_m (t)\|_2^2     +     \int_0^t  \|\partial_x \mathbf u_m\|_2^2 ds   \leq    C\|\mathbf u_0\|_2^2 + C\Big(\int_0^T  \|\mathbf f\|_2 ds\Big)^2 \;  ;    \label{mb-1}\\
&\|\partial_y u_m(t)\|_2^2  \leq   C(  \|\mathbf u_0\|_2, \|\partial_y u_{0}\|_2,  \|\mathbf f\|_2,  \|\partial_y f_1\|_2  , T), \label{mb-3}
\end{align}
for $0\leq t \leq T$, for any $m\in \mathbb Z^+$, where we have used that $\|\mathbf u_{0,m}\|_2 \leq \|\mathbf u_0\|_2$ and $\|\mathbf f_m\|_2 \leq \|\mathbf f\|_2$.

Due to (\ref{mb-1})-(\ref{mb-3}), we have 
\begin{align}
&\mathbf u_m  \;\; \text{is uniformly bounded in }\; L^{\infty}(0,T; \mathcal H);  \label{uni-1}\\
&\partial_y u_m \;\; \text{is uniformly bounded in }\; L^{\infty}(0,T; L^2(\Omega));  \label{uni-1-1}\\
&\partial_x \mathbf u_m \;\; \text{is uniformly bounded in }\; L^2(0,T; \mathcal H).   \label{uni-2}
\end{align}

Recall $\mathcal V = \mathcal H \cap (H^1(\Omega))^2$. 
Using the fact that $\partial_x u_m = \partial_y v_m$ and the uniform bounds (\ref{uni-1})-(\ref{uni-2}), we obtain 
\begin{align}
&\mathbf u_m  \;\; \text{is uniformly bounded in }\; L^2(0,T; \mathcal V).  \label{uni-2-1}
\end{align}

For any vector field $\phi \in L^2( 0,T; \mathcal V )$, using integration by parts, the Cauchy–Schwarz inequality, and Ladyzhenskaya's inequality, we have
\begin{align*}
&\int_0^T \int_{\Omega} \left[ (\mathbf u_m\cdot \nabla)\mathbf u_m \right] \cdot \phi dx dy dt
\leq C \int_0^T \left(\|u_m\|_4^2 +   \|v_m\|_4^2\right)  \|\nabla \phi\|_2 dt  \notag\\
& \leq C \int_0^T \|\mathbf u_m\|_2 \|\mathbf u_m\|_{H^1}  \|\nabla \phi\|_2 dt  \notag\\
&\leq C(\sup_{t\in [0,T]} \|\mathbf u_m\|_2)  \Big(\int_0^T \|\mathbf u_m\|_{H^1}^2 dt\Big)^{1/2} \Big(\int_0^T \|\nabla \phi\|_2^2 dt \Big)^{1/2} \notag\\
&\leq M \Big(\int_0^T \|\nabla \phi\|_2^2 dt \Big)^{1/2},
\end{align*}
where we have used uniform bounds (\ref{uni-1}) and (\ref{uni-2-1}). Therefore, $(\mathbf u_m\cdot \nabla)\mathbf u_m $ is uniformly bounded in $L^2(0,T; \mathcal V')$. Then, it follows from (\ref{Gal}) and (\ref{Delta-p}) that 
\begin{align}
\partial_t \mathbf u_m \;\; \text{is uniformly bounded in }\; L^2(0,T; \mathcal V'). \label{uni-3}
\end{align}

Because of the uniform bounds (\ref{uni-1}), (\ref{uni-2-1}) and (\ref{uni-3}), there exists a vector field $\mathbf u$ such that, on a subsequence, which is still denoted by $\mathbf u_m$, the following weak convergences hold:
\begin{align}
&\mathbf u_m \longrightarrow \mathbf u \;\; \text{weakly$^*$ in} \;   L^{\infty}(0,T;  \mathcal H)  ;    \label{con-1} \\
&\nabla \mathbf u_m \longrightarrow \nabla \mathbf u \;\;  \text{weakly in} \;   L^2(0,T; L^2(\Omega));  \label{con-3} \\
& \partial_t \mathbf u_m      \longrightarrow \partial_t \mathbf u \;\;  \text{weakly$^*$ in} \;    L^2(0,T; \mathcal V' ).  \label{con-4}
\end{align}

Moreover, due to the uniform bounds (\ref{uni-2-1}) and (\ref{uni-3}), and thanks to Aubin's Compactness Theorem, we obtain the following strong convergence of a subsequence:
\begin{align}  \label{con-strong}
\mathbf u_m \longrightarrow \mathbf u  \;\;  \text{strongly in} \;       L^2(0,T; \mathcal H). 
\end{align}

Also, we note that, since $\mathbf u \in L^2(0,T;\mathcal V)$ and $\mathbf u_t\in L^2(0,T; \mathcal V' )$, we obtain the continuity in time: $\mathbf u \in C([0,T]; \mathcal H)$.

We aim to justify that the limit function $\mathbf u$ is indeed a weak solution of equation (\ref{PDE}). 
Let $\mathbf w \in \mathcal H$ be a trigonometric polynomial. 
Then, from (\ref{Gal}), we have
\begin{align}  \label{before-l}
&\int_0^T \langle \partial_t \mathbf u_m, \mathbf w \rangle_{\mathcal V' \times \mathcal V} dt + \int_0^T (\partial_x \mathbf u_m, \mathbf w_x)_{L^2} dt + \int_0^T \int_{\Omega}  [(\mathbf u_m\cdot \nabla)\mathbf u_m] \cdot \mathbf w dx dy dt  \notag\\
&= \int_0^T (\mathbf f_m, \mathbf w)_{L^2} dt. 
\end{align}
We focus on the nonlinear term:
\begin{align}  \label{non}
& \int_0^T  \int_{\Omega}  \left[ (\mathbf u_m\cdot \nabla)\mathbf u_m \right] \cdot  \mathbf w dx dy dt  \notag\\
& =  \int_0^T  \int_{\Omega}  \left[ ((\mathbf u_m - \mathbf u)\cdot \nabla )\mathbf u_m \right] \cdot  \mathbf w dx dy dt +  \int_0^T  \int_{\Omega}  \left[ (\mathbf u\cdot \nabla)\mathbf u_m \right] \cdot  \mathbf w dx dy dt.
\end{align}
Notice that
\begin{align}  \label{non-1}
& \left| \int_0^T  \int_{\Omega}  \left[ ((\mathbf u_m - \mathbf u)\cdot \nabla) \mathbf u_m \right]  \cdot \mathbf w dx dy dt \right|  \notag\\
&\leq   \Big(\int_0^T \int_{\Omega} |\mathbf u_m - \mathbf u|^2 dx dy dt \Big)^{1/2}      \Big(\int_0^T  \int_{\Omega} |\nabla \mathbf u_m |^2 dx dy dt \Big)^{1/2} \|\mathbf w\|_{L^{\infty}(\Omega)} \notag\\
&\leq   C   \Big(\int_0^T  \int_{\Omega} |\mathbf u_m - \mathbf u|^2 dx dy dt \Big)^{1/2}   \|\mathbf w\|_{L^{\infty}(\Omega)}  \longrightarrow 0,
 \end{align}
due to the strong convergence (\ref{con-strong}), as well as the uniform bound (\ref{uni-2-1}).

Moreover, applying the weak convergence (\ref{con-3}), and the assumption that $\mathbf w \in \mathcal H$ is a trigonometric polynomial, we obtain  
\begin{align} \label{non-2}
\lim_{m\rightarrow \infty} \int_0^T  \int_{\Omega}  \left[ (\mathbf u\cdot \nabla)\mathbf u_m \right] \cdot  \mathbf w dx dy dt =  \int_0^T  \int_{\Omega}  \left[ (\mathbf u\cdot \nabla)\mathbf u \right] \cdot  \mathbf w dx dy dt.
\end{align}
Combining (\ref{non})-(\ref{non-2}) yields
\begin{align}  \label{non-3}
\lim_{m\rightarrow \infty} \int_0^T  \int_{\Omega}  \left[ (\mathbf u_m \cdot \nabla)\mathbf u_m \right] \cdot  \mathbf w dx dy dt  =  \int_0^T  \int_{\Omega}  \left[ (\mathbf u\cdot \nabla)\mathbf u \right] \cdot  \mathbf w dx dy dt,
\end{align}
for any trigonometric polynomial $\mathbf w \in \mathcal H$.
 
 Because of the weak convergence (\ref{con-3})-(\ref{con-4}) and the limit (\ref{non-3}), we can let $m\rightarrow \infty$ in (\ref{before-l}) to conclude that
 \begin{align}  \label{non-4}
&\int_0^T \langle \partial_t \mathbf u, \mathbf w \rangle_{\mathcal V' \times \mathcal V} dt + \int_0^T (\partial_x \mathbf u, \mathbf w_x)_{L^2} dt + \int_0^T \int_{\Omega}  [(\mathbf u\cdot \nabla)\mathbf u] \cdot \mathbf w dx dy dt  \notag\\
&= \int_0^T (\mathbf f, \mathbf w)_{L^2} dt,
\end{align} 
for any trigonometric polynomial $\mathbf w \in \mathcal H$, for any $T>0$. Finally, due to the fact that trigonometric polynomials are dense in $\mathcal V$, 
then using a straightforward density argument, we obtain that (\ref{non-4}) holds for all $\mathbf w \in \mathcal V$, for any $T>0$.

\bigskip

\subsection{Uniqueness of weak solutions}

Let $\mathbf u_1$ and $\mathbf u_2$ be two weak solutions found in the previous subsection.  
Consider the difference $\mathbf u = \mathbf u_1 - \mathbf u_2$. Write $\mathbf u =(u,v)$, $\mathbf u_1 =(u_1,v_1)$ and $\mathbf u_2 =(u_2,v_2)$. 
We have 
\begin{align}    \label{u-1}
\mathbf u_t - \mathbf u_{xx} + (\mathbf u\cdot \nabla)\mathbf u_1 +   (\mathbf u_2\cdot \nabla)\mathbf u +  \nabla p =0,
\end{align}
with $\nabla\cdot  \mathbf u = u_x + v_y=0$.

We take the inner product of (\ref{u-1}) with $\mathbf u$. Due to identity (\ref{vanish}), we have
\begin{align}      \label{u-2}
&\frac{1}{2}\frac{d}{dt} \|\mathbf u\|_2^2 + \|\mathbf u_x\|_2^2 = - \left((\mathbf u\cdot \nabla)\mathbf u_1, \mathbf u \right)  \notag\\
&= -  \int_{\Omega} \left(u \partial_x u_1 +   v \partial_y u_1\right) u dx dy   -   \int_{\Omega}  \left(u \partial_x v_1 +  v \partial_y v_1 \right) v dx dy.
\end{align}

In the following, we estimate all terms on the right-hand side of (\ref{u-2}).

First, by using inequality (\ref{ineq2}), we deduce
\begin{align}     \label{u-3}
&\left|\int_{\Omega} u^2 \partial_x u_1 dx dy \right|= 2 \left|\int_{\Omega} u u_x  u_1 dx dy\right| \leq 2 \|u\|_2 \|u_x\|_2 \|u_1\|_{\infty} \notag\\
&\leq C  \|u\|_2 \|u_x\|_2    \left(   \|u_1\|_2 +     \|\partial_{x}u_1\|_2       +       \|\partial_{y }u_1\|_2     +             \|\partial_{xy }u_1\|_2          \right) \notag\\
& \leq \epsilon \|u_x\|_2^2 + C \|u\|_2^2 \left(   \|u_1\|_2^2 +     \|\partial_{x}u_1\|_2^2       +       \|\partial_{y }u_1\|_2^2     +             \|\partial_{xy }u_1\|_2^2    \right).
\end{align}

Next, we look at
\begin{align}     \label{u-4}
\left|\int_{\Omega} u v \partial_x v_1 dx dy \right|
=  \left|\int_{\Omega} (u_x v + u v_x) v_1 dx dy \right|.
\end{align}
Using inequalities (\ref{ineq}) and (\ref{Poin}) as well as the fact that $v_y = -u_x$, we estimate
\begin{align}     \label{u-5}
\left|\int_{\Omega} u_x v  v_1 dx dy \right|  &\leq C\|u_x\|_2 \|v\|_2^{1/2}   \|v_y\|_2^{1/2}  \|v_1\|_2^{1/2}  \left( \|v_1\|_2^{1/2} +  \|\partial_x v_1\|_2^{1/2}\right) \notag\\
&=  C \|u_x\|_2^{3/2} \|v\|_2^{1/2} \|v_1\|_2^{1/2} \|\partial_x v_1\|_2^{1/2}  + C  \|u_x\|_2^{3/2} \|v\|_2^{1/2} \|v_1\|_2   \notag  \\
&\leq \epsilon  \|u_x\|_2^2 + C \|v\|_2^2 \left( \|v_1\|_2^2\|\partial_x v_1\|_2^2 + \|v_1\|_2^4   \right).
\end{align}
Similarly, due to inequalities (\ref{ineq}) and (\ref{Poin}), we deduce 
\begin{align}    \label{u-6}
\left|\int_{\Omega} u v_x  v_1 dx dy\right| &\leq C\|v_x\|_2 \|u\|_2^{1/2}  \left(  \|u\|_2^{1/2}  +    \|u_x\|_2^{1/2} \right) \|v_1\|_2^{1/2}     \|\partial_y v_1\|_2^{1/2}  \notag\\
&=  C \|\mathbf u_x\|_2^{3/2} \|u\|_2^{1/2} \|v_1\|_2^{1/2} \|\partial_x u_1\|_2^{1/2}   +  C \|v_x\|_2 \|u\|_2 \|v_1\|_2^{1/2} \|\partial_x u_1\|_2^{1/2}     \notag    \\
&\leq \epsilon  \|\mathbf u_x\|_2^2 + C \|u\|_2^2 \left(\|v_1\|_2^2 \|\partial_x u_1\|_2^2   +    \|v_1\|_2^2 +   \|\partial_x u_1\|_2^2     \right).
\end{align}
Combining (\ref{u-4})-(\ref{u-6}) yields
\begin{align}     \label{u-7}
\left|\int_{\Omega} u v \partial_x v_1 dx dy\right| \leq  2\epsilon  \|\mathbf u_x\|_2^2 + C \|\mathbf u\|_2^2   \left(    \|v_1\|_2^2\|\partial_x \mathbf u_1\|_2^2 +   \|v_1\|_2^4  + \|v_1\|_2^2  +     \|\partial_x u_1\|_2^2  \right).
\end{align}

Next, we apply inequalities (\ref{ineq}) and (\ref{Poin}) again to derive
\begin{align}   \label{u-8}
&\left|\int_{\Omega} v^2 \partial_y v_1 dx dy\right| = 2 \left|\int_{\Omega} v v_y v_1 dx dy \right|= 2 \left|\int_{\Omega} v u_x v_1 dx dy\right|  \notag \\
&\leq C \|u_x\|_2 \|v\|_2^{1/2}     \|v_y\|_2^{1/2}  \|v_1\|_2^{1/2}   \left(   \|v_1\|_2^{1/2}   +    \|\partial_x v_1\|_2^{1/2}   \right)   \notag\\
&= C \|u_x\|_2^{3/2}  \|v\|_2^{1/2}  \|v_1\|_2^{1/2} \|\partial_x v_1\|_2^{1/2}   +    C \|u_x\|_2^{3/2}  \|v\|_2^{1/2}  \|v_1\|_2         \notag\\
&\leq \epsilon \|u_x\|_2^2 + C  \|v\|_2^2  \left(   \|v_1\|_2^2 \|\partial_x v_1\|_2^2 +    \|v_1\|_2^4 \right).
\end{align}

Finally, using the fact that $H^1$ is imbedded in $L^{\infty}$ in 1D, we estimate
\begin{align}     \label{u-9}
&\left|\int_{\Omega} v (\partial_y u_1) u dx dy\right| \leq \int_0^1 \Big(\sup_{y\in [0,1]} |v| \Big) \Big(\int_0^1 |\partial_y u_1|^2 dy\Big)^{1/2}   \Big(\int_0^1 u^2 dy\Big)^{1/2}   dx  \notag\\
&\leq C \int_0^1   \Big(\int_0^1  v_y^2 dy\Big)^{1/2}    \Big(\int_0^1 |\partial_y u_1|^2 dy\Big)^{1/2}   \Big(\int_0^1 u^2 dy\Big)^{1/2}   dx  \notag\\
&\leq C    \|v_y\|_2  \|u\|_2     \Big[  \sup_{x\in [0,1]} \int_0^1 |\partial_y u_1|^2 dy  \Big]^{1/2}   \notag\\
&\leq C \|u_x\|_2 \|u\|_2  \left(  \|\partial_y u_1\|_2^2    +     \|\partial_{xy} u_1\|_2^2 \right)^{1/2}        \notag\\
&\leq   \epsilon \|u_x\|_2^2 + C \|u\|_2^2  \left(  \|\partial_y u_1\|_2^2    +     \|\partial_{xy} u_1\|_2^2 \right).
\end{align}

Now, we can apply estimates (\ref{u-3}), (\ref{u-7}), (\ref{u-8}) and (\ref{u-9}) to the right-hand side of (\ref{u-2}). Then
\begin{align*}
\frac{d}{dt} \|\mathbf u\|_2^2 + \|\mathbf u_x\|_2^2 \leq C \|\mathbf u\|_2^2 (  \|v_1\|_2^4 +    \|\mathbf u_1\|_2^2 +     \|\partial_{x} u_1\|_2^2  +      \|\partial_y u_1\|_2^2        +          \|\partial_{xy} u_1\|_2^2 + \|v_1\|_2^2 \|\partial_x \mathbf u_1\|_2^2 ).
\end{align*}
By the Gr\"onwall inequality, we have
\begin{align}  \label{u-10}
\|\mathbf u(t)\|_2^2 \leq  \|\mathbf u_0\|_2^2  e^{C \int_0^t  ( \|v_1\|_2^4 +    \|\mathbf u_1\|_2^2 +     \|\partial_{x} u_1\|_2^2  +      \|\partial_y u_1\|_2^2        +          \|\partial_{xy} u_1\|_2^2 + \|v_1\|_2^2 \|\partial_x \mathbf u_1\|_2^2 )  ds }.
\end{align}
Due to (\ref{e-1})-(\ref{e-2}) and (\ref{e-21})-(\ref{e-22}), the right-hand side of (\ref{u-10}) is finite for all $t>0$. This implies the uniqueness of weak solutions.

\bigskip

\section{Uniform boundness of $\|\mathbf u\|_{H^2(\Omega)}$}

This section is devoted to proving Theorem \ref{thm-3}: the uniform boundedness of $H^2$-norm of the velocity $\mathbf u$.

Consider the vorticity $\omega = v_x - u_y$. Then the vorticity formulation of equation (\ref{PDE}) reads
\begin{align} \label{vor}
\omega_t + u \omega_x + v \omega_y = \omega_{xx} + \text{curl} \, \mathbf f.
\end{align}

For any function $g\in L^1(\Omega)$, we recall the notation
\begin{align*} 
\bar{g}(y) = \int_0^1 g(x,y) dx,  \qquad \tilde{g}(x,y) = g(x,y)- \bar{g}(y).
\end{align*}
Clearly, $\tilde g_x = g_x$. Also, we have the Poincar\'e inequality:
\begin{align} \label{tilde}
\|\tilde g\|_2 \leq C \|\tilde g_x\|_2,
\end{align}
since the horizontal mean of $\tilde g$ is zero. 

Below we shall conduct the estimate.

\subsection{$\|\omega\|_2^2$-estimate}
Taking the inner product of (\ref{vor}) with $\omega$, we obtain
\begin{align} \label{v-1}
\frac{1}{2} \frac{d}{dt} \|\omega\|_2^2 + \|\omega_x\|_2^2 = (\text{curl} \, \mathbf f, \omega)_{L^2} \leq  \|\text{curl} \, \mathbf f\|_2 \|\omega\|_2.
\end{align}

Therefore, $\frac{d}{dt} \|\omega\|_2 \leq  \|\text{curl} \, \mathbf f\|_2$. It follows that
\begin{align}     \label{v-2}
\|\omega(t)\|_2 \leq \|\omega_0\|_2 + \int_0^t \|\text{curl} \, \mathbf f\|_2 ds
\end{align}
Then, by integrating (\ref{v-1}) over $[0,t]$ and using (\ref{v-2}), we obtain
\begin{align}      \label{v-2-0}
& \int_0^t \|\omega_x\|_2^2 ds \leq \frac{1}{2} \|\omega_0\|_2^2 + \int_0^t  \|\text{curl} \, \mathbf f\|_2 \|\omega\|_2 ds  \notag\\
&\leq  \frac{1}{2} \|\omega_0\|_2^2 + \Big(\int_0^t  \|\text{curl} \, \mathbf f\|_2  ds \Big)    \Big( \|\omega_0\|_2 + \int_0^t \|\text{curl} \, \mathbf f\|_2 ds \Big)     \notag\\
&\leq    \|\omega_0\|_2^2 + 2 \Big(\int_0^t  \|\text{curl} \, \mathbf f\|_2  ds \Big)^2. 
\end{align}

Due to $u_x + v_y =0$, it is easy to see that 
\begin{align} \label{v-2-1}
\|\omega\|_2 = \|\nabla \mathbf u\|_2.
\end{align}
Indeed,
\begin{align*} 
&\|\omega\|_2^2 = \|v_x - u_y\|_2^2 = \int_{\Omega} \left(v_x^2 - 2 v_x u_y + u_y^2\right) dx dy \notag\\
& = \|v_x\|_2^2 + \|u_y\|_2^2 - 2\int_{\Omega} v_y u_x dx dy = \|v_x\|_2^2 + \|u_y\|_2^2 + \|u_x\|_2^2 + \|v_y\|_2^2 = \|\nabla \mathbf u\|_2^2,
\end{align*}
where we have used the boundary condition $v|_{y=0} = v|_{y=1} =0$ to conduct integration by parts. Similarly, we have $\|\omega_x\|_2 = \|\nabla \mathbf u_x\|_2$.

Then, it follows from (\ref{v-2}), (\ref{v-2-0}) and (\ref{v-2-1}) that
\begin{align}   \label{v-2-2}
\|\nabla \mathbf u(t)\|_2^2 + \int_0^t \|\nabla \mathbf u_x\|_2^2 ds  \leq  C\|\omega_0\|_2^2 + C\Big(\int_0^t  \|\text{curl} \, \mathbf f\|_2  ds \Big)^2.
\end{align}
By (\ref{tilde}) and (\ref{v-2-2}), we obtain
\begin{align} \label{v-2-3}
\int_0^t \|\tilde{\mathbf u}\|_2^2 ds \leq C\int_0^t \|\nabla \tilde{\mathbf u}\|_2^2 ds \leq C \int_0^t \|\nabla \tilde{\mathbf u}_x\|_2^2 ds   \leq C\|\omega_0\|_2^2 + C\Big(\int_0^t  \|\text{curl} \, \mathbf f\|_2  ds \Big)^2.
\end{align}

Moreover, by (\ref{v-2-2}), we have
\begin{align}   \label{v-2-4}
\|\tilde{\mathbf u}\|_2^2 \leq C \|\tilde{\mathbf u}_x\|_2^2 = C \|\mathbf u_x\|_2^2 \leq C\|\omega_0\|_2^2 + C\Big(\int_0^t  \|\text{curl} \, \mathbf f\|_2  ds \Big)^2. 
\end{align}

\subsection{$\|\nabla \omega\|_2^2$-estimate}
Differentiate (\ref{vor}) by $x$ and multiply by $\omega_x$. Also, differentiate (\ref{vor}) by $y$ and multiply by $\omega_y$. Adding the results and integrating over $\Omega$, we obtain
\begin{align} \label{v-3}
&\frac{1}{2}  \frac{d}{dt} \|\nabla \om \|_2^2 \,   + \|\nabla \om_x\|_2^2 \notag\\
& =-  \int_{\Omega} \left[u_x \, \om_x \, \om_x +u_y  \, \om_y \, \om_x  + v_x \, \om_x \, \om_y +v_y \, \om_y \,  \om_y    -   (\text{curl} \, \mathbf f_x) \omega_x        -   (\text{curl} \, \mathbf f_y) \omega_y        \right] dxdy    \notag\\
& =-  \int_{\Omega} [ u_x  \, \om_x \, \om_x -\om \,  \om_y \, \om_x  + 2 v_x \,  \om_x \, \om_y +v_y \, \om_y \,  \om_y      -   (\text{curl} \, \mathbf f_x) \omega_x        -   (\text{curl} \, \mathbf f_y) \omega_y    ] dxdy.
\end{align}
.

The terms on the right-hand side can be estimated as follows. 

First, using integration by parts and inequality (\ref{ineq}), we obtain
\begin{align}  \label{v-4}
& \left|  \int_{\Omega}  u_x \,  \om_x \, \om_x \,  dxdy  \right| = \left|  \int_{\Omega}  \tilde{u}_x \, \om_x \, \om_x  \,   dxdy  \right| = 2 \left|  \int_{\Omega}  \tilde{u} \, \om_x \, \om_{xx}   \,   dxdy  \right|  \notag\\
& \leq C \left(\|\tilde{u} \|_2 +  \|\tilde{u} \|_2^{1/2}   \|\tilde{u}_y\|_2^{1/2} \right)  \|\om_x \|_2^{\frac{1}{2}} \|\om_{xx}\|_2^{\frac{3}{2}}  \notag\\
&\leq     \ee  \| \om_{xx} \|_2^2   +       C(\|\tilde{u} \|_2^4 +  \|\tilde{u} \|_2^2   \|\tilde{u}_y \|_2^2) \|\om_x \|_2^2.
 \end{align}

Using the boundary condition $v|_{y=0}=0$, we see that $\tilde v|_{y=0}=0$. Then, by inequality (\ref{Poin}), we have $\|\tilde v\|_2^2 \leq \|\tilde v_y\|_2^2$. Now, using inequalities (\ref{ineq}), we deduce
\begin{align}   \label{v-5}
& \left|  \int_{\Omega}  v_x \,  \om_x \, \om_y   \,   dxdy  \right| =  \left|  \int_{\Omega}  \tilde v_x \,  \om_x \, \om_y   \,   dxdy  \right|   \leq \left|  \int_{\Omega}  \tilde v \, \om_x \, \om_{xy}  \,    dxdy  \right| + \left|  \int_{\Omega}  \tilde v  \,  \om_y \, \om_{xx} \, dxdy
\right| \notag\\
& \leq        C\| \tilde v \|_2^{1/2}       \| \tilde v_y \|_2^{1/2} \left[   \|\om_x \|_2^{\frac{1}{2}} \| \om_{xx} \|_2^{\frac{1}{2}}  \, \| \om_{xy} \|_2
+  (\|\om_y \|_2 + \|\om_y \|_2^{\frac{1}{2}} \| \om_{xy} \|_2^{\frac{1}{2}} ) \, \| \om_{xx} \|_2 \right] \notag\\
& \leq     \ee \, \| \nabla \om_{x} \|_2^2    +    C  \|\tilde v\|_2 \| \tilde v_y \|_2 \left[   \|\om_x \|_2 \| \om_{xx} \|_2  \,
+  \|\om_y \|_2^2 + \|\om_y \|_2 \| \om_{xy} \|_2 \right]   \notag\\
& \leq    2 \ee \, \| \nabla \om_{x} \|_2^2   +   C \|\tilde v\|_2  \| \tilde v_y \|_2 \|\om_y\|_2^2 +   C \|\tilde v\|_2^2 \| \tilde v_y \|_2^2  \|\nabla \om \|_2^2 \notag\\
& \leq   2 \ee \, \| \nabla \om_{x} \|_2^2   +   C \| \tilde v_y \|_2^2 \|\om_y\|_2^2 +   C \|\tilde v\|_2^2 \| \tilde v_y \|_2^2  \|\nabla \om \|_2^2 \notag\\
& =  2 \ee \, \| \nabla \om_{x} \|_2^2   +   C \| \tilde u_x \|_2^2 \|\om_y\|_2^2 +   C   \|\tilde v\|_2^2  \| \tilde u_x \|_2^2  \|\nabla \om \|_2^2.
\end{align}

Next,
\begin{align}    \label{v-6}
& \left|  \int_{\Omega}  v_y \, \om_y \,  \om_y   \,  dxdy  \right| =  \left|  \int_{\Omega}  \tilde{u}_x \, \om_y \, \om_{y}   \,   dxdy  \right| =
2 \left|  \int_{\Omega}  \tilde{u} \,  \om_y \, \om_{xy} \, dxdy
\right|     \notag   \\
& \leq  C\left(\|\tilde{u} \|_2 +     \|\tilde u\|_2^{1/2} \| \tilde{u}_y \|_2^{1/2} \right)   \left(\|\om_y \|_2 + \|\om_y \|_2^{\frac{1}{2}} \| \om_{xy} \|_2^{\frac{1}{2}} \right) \, \| \om_{xy} \|_2 \notag\\
& \leq    \ee \| \om_{xy} \|_2^2   +    C  (\|\tilde{u} \|_2^2 +     \|\tilde u\|_2\| \tilde{u}_y \|_2 )   (\|\om_y \|_2^2 + \|\om_y \|_2 \| \om_{xy} \|_2 )        \notag  \\
& \leq        2 \ee \| \om_{xy} \|_2^2         +        C  (\|\tilde{u} \|_2^2 +  \| \tilde{u}_y \|_2^2+  \|\tilde{u} \|_2^4  +  \|\tilde u\|_2^2\| \tilde{u}_y \|_2^2    )   \|\om_y \|_2^2 .
\end{align}

Notice that $ \int_{\Omega}  \om \,  \bar{\om}_y \, \om_x  \, dxdy = \frac{1}{2} \int_{\Omega} (\om^2)_x  \,  \bar{\om}_y  dx dy =    - \frac{1}{2} \int_{\Omega} (\om^2)  \,  \bar{\om}_{xy}  dx dy  =    0 $. Thus
\begin{align}    \label{v-7}
&\left|  \int_{\Omega}  \om \,  \om_y \, \om_x  \, dxdy  \right| =   \left|  \int_{\Omega}  \om     \,  \tilde \om_y        \, \om_x      \,  dxdy  \right|  
   \notag\\
&\leq C\|\omega\|_2           \|\tilde \omega_{xy}\|_2      \left(\|\omega_x\|_2   +      \|\omega_x\|_2^{1/2}    \| \omega_{xy}\|_2^{1/2}       \right)        \notag\\
&\leq \ee  \| \om_{xy} \|_2^2   + C \left(\|\omega\|_2^4  +  \|\omega\|_2^2  \right)  \|\omega_x\|_2^2.
 \end{align}

In addition, due to the assumption that $\overline{f_1} =0$, we get
\begin{align*} 
\left| \int_{\Omega} \left[   (\text{curl} \, \mathbf f_x) \omega_x          +   (\text{curl} \, \mathbf f_y) \omega_y        \right] dxdy \right|
&\leq \int_{\Omega} \Big( |(\text{curl} \, \mathbf f) \omega_{xx}|  +   |(\partial_y f_2) \omega_{xy}|  
+   |(\partial_{yy} f_1) \tilde{\omega}_{y}|  \Big)    dx dy  \notag\\
&\leq C \| \nabla \om_{x} \|_2 \|\text{curl} \, \mathbf f\|_2 +   \|\partial_{yy} f_1\|_2 \|\tilde{\omega}_{y}\|_2,
\end{align*}
where we have used $\|\text{curl} \, \mathbf f\|_2 = \|\nabla \mathbf f\|_2$ since $\mathbf f$ is divergence free.

By the Poincar\'e inequality, we have 
\begin{align*} 
&\|\tilde{\omega}_{y}\|_2 \leq  C\|\tilde{\omega}_{xy}\|_2 =  C \| \omega_{xy} \|_2 \leq C \| \nabla \om_{x} \|_2.
\end{align*} 
Thus,
\begin{align} \label{v-f}
\left| \int_{\Omega} \left[   (\text{curl} \, \mathbf f_x) \omega_x          +   (\text{curl} \, \mathbf f_y) \omega_y        \right] dxdy \right| 
&\leq  C \| \nabla \om_{x} \|_2 ( \|\text{curl} \, \mathbf f\|_2 +   \|\partial_{yy} f_1\|_2 )  \notag\\
&\leq \epsilon \| \nabla \om_{x} \|_2^2 
+ C( \|\text{curl} \, \mathbf f\|_2^2 +   \|\partial_{yy} f_1\|_2^2 ).
\end{align}

Combining (\ref{v-3})-(\ref{v-f}) and taking $\ee$ sufficiently small, we obtain
\begin{align*}    
&\frac{d}{dt} \|\nabla \om \|_2^2 \,   + \|\nabla \om_x\|_2^2  \notag\\
&\leq    C(\|\tilde{u} \|_2^4 +  \|\tilde{u} \|_2^2   \|\tilde{u}_y \|_2^2) \|\om_x \|_2^2  +   C\|\tilde v\|_2^2  \| \tilde u_x \|_2^2  \|\nabla \om \|_2^2 \notag  \\
&\hspace{0.2 in} +  C  (\|\tilde{u} \|_2^2 +  \| \tilde{u}_x \|_2^2+    \| \tilde u_y \|_2^2  +   \|\tilde{u} \|_2^4  +  \|\tilde u\|_2^2\| \tilde{u}_y \|_2^2    )  \|\om_y \|_2^2   \notag\\
&\hspace{0.2 in} +    C( \|\omega\|_2^4  +  \|\omega\|_2^2) \|\om_x \|_2^2    + C( \|\text{curl} \, \mathbf f\|_2^2 +   \|\partial_{yy} f_1\|_2^2 )       \notag\\
&\leq   C  (\| \nabla \tilde{u} \|_2^2  +   \|\tilde{u} \|_2^2       +  \|\tilde{u} \|_2^4  +  \|\tilde{\mathbf u}\|_2^2\| \nabla \tilde{u} \|_2^2)  \|\nabla \om \|_2^2 +    C( \|\omega\|_2^4  +  \|\omega\|_2^2) \|\om_x \|_2^2 \notag\\
&\hspace{0.2 in}  + C( \|\text{curl} \, \mathbf f\|_2^2 +   \|\partial_{yy} f_1\|_2^2 ).
\end{align*}

By Gronwall's  inequality, for all $t\geq 0$,
\begin{align}      \label{v-9}
 &\|\nabla \om(t) \|_2^2    \notag\\
 & \leq  e^{C \int_0^t (\| \nabla \tilde{u}\|_2^2      +   \|\tilde{u} \|_2^2        +  \|\tilde{u} \|_2^4  +  \|\tilde{\mathbf u}\|_2^2\| \nabla \tilde{u} \|_2^2  ) ds}  \notag\\
&\hspace{0.3 in} \times  \left(   \|\nabla \om_0 \|_2^2 +     C \int_0^t  \left[( \|\omega\|_2^4  +  \|\omega\|_2^2) \|\om_x \|_2^2 +  \|\text{curl} \, \mathbf f\|_2^2    +   \|\partial_{yy} f_1\|_2^2         \right] ds    \right) 
    \notag\\
 &\leq        e^{C \left[ 1+ \sup_{0\leq s\leq t} \|\tilde{\mathbf u} \|_2^2     \right] \int_0^t \left(\|\tilde{u} \|_2^2  + \|\nabla \tilde{u} \|_2^2 \right) ds}       \notag\\
&\hspace{0.3 in} \times  \Big(   \|\nabla \om_0 \|_2^2 +C   \sup_{0\leq s\leq t}     ( \|\omega\|_2^4  +   \|\omega\|_2^2   )\int_0^t   \|\omega_x\|_2^2 ds       +  C\int_0^t \left(\|\text{curl} \, \mathbf f\|_2^2       +   \|\partial_{yy} f_1\|_2^2     \right) ds      \Big)   \notag \\ 
&\leq M\Big( \|\omega_0\|_2, \|\nabla \omega_0\|_2,    \int_0^t  \Big(\|\text{curl} \, \mathbf f\|_2 +\|\text{curl} \,\mathbf f\|_2^2    +     \|\partial_{yy} f_1\|_2^2        \Big) ds    \Big),
\end{align}
where we have used uniform bounds (\ref{v-2}), (\ref{v-2-0}), (\ref{v-2-3}) and (\ref{v-2-4}).

In particular,  if $\int_0^{\infty} \Big(\|\text{curl} \, \mathbf f\|_2 +\|\text{curl} \, \mathbf f\|_2^2   +     \|\partial_{yy} f_1\|_2^2     \Big) dt \leq C_0$, then $\|\nabla \om(t) \|_2^2$ is uniformly bounded by a constant for all time: 
\begin{align}  \label{v-10}
\|\nabla \om(t) \|_2^2 \leq M (\|\omega_0\|_2, \|\nabla \omega_0\|_2, C_0).
\end{align}
Due to uniform bounds (\ref{e-1}), (\ref{v-2}) and (\ref{v-10}), we obtain the uniform bound (\ref{H2b}) for $\|\mathbf u\|_{H^2}$.

\bigskip

\section{Stability of solutions around shear flows $(ay, 0)$}
This section is devoted to proving Proposition \ref{stability}, concerning the stability of solutions around shear flows $(ay, 0)$.

\begin{proof}
Assume $\text{curl} \, \mathbf f =0$. Let $(u+ay, v)$ be a solution of equation (\ref{PDE}). Then the vorticity $\omega = v_x - (u+ay)_y = v_x - u_y - a$. 
We denote by 
\begin{align} \label{om-0}
\om^*= v_x-u_y= \om +a. 
\end{align}
Due to the vorticity formulation (\ref{vor}), we have
\begin{equation}\label{om-1}
 \om^*_t+(u+a y) \om^*_x+ v\om^*_y  = \om^*_{xx} .
 \end{equation}
 We split $\om^* = \widetilde{\om^*} + \overline{\om^*}$. Then, 
 equation (\ref{om-1}) can be written as
 \begin{align}    \label{om-2}       
(\widetilde{\om^*})_t+     (\overline{\om^*})_t     +    (u+a y) (\widetilde{\om^*})_x+ v (\widetilde{\om^*})_y   + v (\overline{\om^*})_y     = (\widetilde{\om^*})_{xx} .
\end{align}
 
We take the inner product of equation (\ref{om-2}) with $\widetilde{\om^*}$. Notice that $\int_{\Omega}  (\overline{\om^*})_t   (\widetilde{\om^*}) dx dy 
= \int_0^1  (\overline{\om^*})_t   \overline{(\widetilde{\om^*})} dy =0$. Then, using integration by parts and divergence free condition, we have
\begin{align}     \label{om-3}
&  \frac{1}{2} \frac{d}{dt} \|\widetilde{\om^*}\|_2^2    + \|(\widetilde{\om^*})_x\|_2^2   = - \int_{\Omega} v (\overline{\om^*})_y (\widetilde{\om^*}) dx dy
\leq \|v\|_{\infty}  \|(\overline{\om^*})_y \|_2 \|\widetilde{\om^*} \|_2     \notag\\
&\leq \epsilon \|v\|_{\infty}^2 +  C \|(\overline{\om^*})_y \|_2^2 \|\widetilde{\om^*} \|_2^2.
\end{align}
Thanks to inequalities (\ref{ineq2}) and (\ref{Poin}), we have
\begin{align}     \label{om-4}
&\|v\|_{\infty}^2 \leq C ( \|v_y\|_2^2 +  \|v_x\|_2^2  +  \|v_{xy}\|_2^2) = C ( \|u_x\|_2^2 +  \|v_x\|_2^2  +  \|v_{xy}\|_2^2)  \notag\\
& =       C ( \|\tilde u_x\|_2^2 +  \|\tilde v_x\|_2^2  +  \|\tilde v_{xy}\|_2^2) 
\leq C ( \|\tilde u_{xx}\|_2^2 +  \|\tilde v_{xx}\|_2^2  +  \|\tilde v_{xy}\|_2^2).
\end{align}

Since $u_x + v_y =0$, we have $\tilde u_{xx} + \tilde v_{xy} =0$. Then, using (\ref{om-0}) and integration by parts, we have
\begin{align} \label{om-5}
\|(\widetilde{\om^*})_x\|_2^2 =  \int_{\Omega}  (\tilde v_{xx} - \tilde u_{xy})^2 dx dy  =
  \|\tilde v_{xx}\|_2^2 +  \|\tilde u_{xy}\|_2^2 +   \|\tilde v_{xy}\|_2^2 + \|\tilde u_{xx}\|_2^2.
 \end{align}

Combining (\ref{om-4}) and (\ref{om-5}) yields
\begin{align}  \label{om-6}
\|v\|_{\infty}^2 \leq C \|(\widetilde{\om^*})_x\|_2^2.
\end{align}

Therefore, by (\ref{om-3}) and (\ref{om-6}) and taking $\epsilon$ sufficiently small, we have
\begin{align}     \label{om-7}
\frac{d}{dt} \|\widetilde{\om^*}\|_2^2 + \|(\widetilde{\om^*})_x\|_2^2     \leq  C \|(\overline{\om^*})_y \|_2^2 \|\widetilde{\om^*} \|_2^2
\leq C \|(\overline{\om^*})_y \|_2^2 \|(\widetilde{\om^*} )_x\|_2^2.
\end{align}

By (\ref{om-0}), we see that $(\overline{\omega^*})_y =   (\bar{\om}+a)_y  = (\bar{\omega})_y$. 
Also, $\|(\bar{\omega})_y\|_2^2 = \int_{\Omega} (\int_0^1 \omega_y dx)^2 dx dy  \leq  \int_{\Omega} (\int_0^1 \omega_y^2 dx) dx dy =  \|\omega_y\|_2^2$. Hence,
\begin{align}     \label{om-8}
\|(\overline{\omega^*})_y\|_2^2 = \|(\bar{\omega})_y\|_2^2 \leq  \|\omega_y\|_2^2 \leq \|\nabla \omega\|_2^2 \leq M( \|\omega_0\|_2, \|\nabla \omega_0\|_2),
\end{align}
where the last inequality is due to (\ref{v-9}). 

Combining (\ref{om-7}) and (\ref{om-8}) yields
\begin{align*}
\frac{d}{dt} \|\widetilde{\om^*}\|_2^2  + \|(\widetilde{\om^*})_x\|_2^2     \leq    C \cdot     M( \|\omega_0\|_2, \|\nabla \omega_0\|_2)    \|(\widetilde{\om^*} )_x\|_2^2.
\end{align*}

Therefore, by letting $\|\omega_0\|_2$ and $\|\nabla \omega_0\|_2$ sufficiently small so that
\begin{align}    \label{om-9}
C \cdot     M( \|\omega_0\|_2, \|\nabla \omega_0\|_2) \leq \frac{1}{2}\,,
\end{align}
where the constant $M( \|\omega_0\|_2, \|\nabla \omega_0\|_2)$ comes from (\ref{v-9}), we have
\begin{align*}
\frac{d}{dt} \|\widetilde{\om^*}\|_2^2  + \frac{1}{2}\|(\widetilde{\om^*})_x\|_2^2 \leq 0.
\end{align*}
It follows that 
\begin{align*}
\frac{d}{dt} \|\widetilde{\om^*}\|_2^2  + \alpha\|\widetilde{\om^*}\|_2^2 \leq 0,
\end{align*}
for some constant $\alpha>0$.

By Gronwall's  inequality, we have
\begin{align*}
\|\widetilde{\om^*}(t)\|_2^2  \leq
 e^{- \alpha t}    \|\widetilde{\om^*} (0)\|_2^2,
\end{align*}
for all $t\geq 0$.
Therefore, the oscillation part decays exponentially to zero, if $\|\omega_0\|_2$ and $\|\nabla \omega_0\|_2$ are sufficiently small.
\end{proof}

\bigskip

\section{Asymptotic solutions}
This section is devoted to proving Proposition \ref{INFTY}. It is about the asymptotic behavior of solutions if the external force vanishes. 
First, we need a technical lemma.

\begin{lemma} \label{lemmA}
Let $f(t)$ and $g(t)$ be $C^1$ functions on $\mathbb R^+$ such that $f(t) \geq 0$ for all $t>0$. Assume
$\frac{df}{dt} + \frac{dg}{dt} +\lambda f \leq 0$ for $t>0$ where $\lambda >0$, and $\lim_{t\rightarrow \infty} g(t)=A$ for some constant $A$. Then, 
$\lim_{t\rightarrow \infty}  f(t) =0$.
\end{lemma}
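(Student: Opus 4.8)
The plan is to introduce the auxiliary function $h(t) = f(t) + g(t)$, which converts the differential inequality into a monotonicity statement, and then to exploit the nonnegativity of $f$ together with an integrability bound.

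First, from the hypothesis $f' + g' + \lambda f \le 0$ I would observe that $h'(t) = f'(t) + g'(t) \le -\lambda f(t) \le 0$, so $h$ is non-increasing on $\mathbb{R}^+$. Since $f \ge 0$, we have $h = f + g \ge g$, and because $g(t) \to A$ the function $g$ is bounded below for all large $t$; hence $h$ is bounded below. A non-increasing function that is bounded below converges, so $h(t) \to L$ for some finite $L$, and consequently $f(t) = h(t) - g(t) \to L - A$. In particular the limit of $f$ exists, and since $f \ge 0$ we get $L - A \ge 0$.

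Next, I would integrate the monotonicity inequality $h'(t) \le -\lambda f(t)$ over $[0,t]$ to obtain $\lambda \int_0^t f(s)\, ds \le h(0) - h(t) \le h(0) - L$, where the last bound uses $h(t) \ge L$. Letting $t \to \infty$ then gives $\int_0^\infty f(s)\, ds < \infty$, i.e. $f$ is integrable on $\mathbb{R}^+$.

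Finally, I would combine the two facts: $f$ has a limit $L - A \ge 0$ and $f$ is integrable. If $L - A > 0$, then $f(s) \ge \tfrac{1}{2}(L-A)$ for all sufficiently large $s$, forcing $\int_0^\infty f = \infty$, a contradiction. Therefore $L - A = 0$ and $\lim_{t\to\infty} f(t) = 0$. The only point requiring slight care is verifying that $h$ is genuinely bounded below before concluding its convergence, and this is precisely where the convergence $g \to A$ (rather than mere boundedness of $g$) is used. An alternative route, should one prefer a more computational argument, is the integrating-factor method: multiplying the inequality by $e^{\lambda t}$, integrating by parts, and applying L'Hôpital's rule to the resulting term $\lambda e^{-\lambda t}\int_0^t e^{\lambda s} g(s)\, ds$, whose limit equals $A$; this yields $\limsup_{t\to\infty} f(t) \le 0$ directly, and nonnegativity of $f$ closes the argument.
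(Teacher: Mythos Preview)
Your argument is correct and takes a genuinely different route from the paper. The paper proceeds exactly via the integrating-factor method you mention at the end as an ``alternative'': it multiplies the inequality by $e^{\lambda t}$, integrates over $[t_0,t]$, integrates the $g'$-term by parts, divides through by $e^{\lambda t}$, and then invokes L'H\^opital's rule to compute $\lim_{t\to\infty}\lambda e^{-\lambda t}\int_{t_0}^t e^{\lambda s}g(s)\,ds = A$, concluding $\limsup_{t\to\infty} f(t)\le 0$. Your primary argument is more elementary and arguably more transparent: by passing to $h=f+g$ you replace the differential inequality with plain monotonicity, and the two ingredients (existence of $\lim f$ from convergence of $h$ and $g$; integrability of $f$ from the drop of $h$) combine to force the limit to be zero without any calculus beyond the definition of a convergent monotone function. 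The paper's approach has the mild advantage of yielding the conclusion in one pass without first establishing that $\lim f$ exists, but at the cost of the L'H\^opital step; your approach avoids that machinery entirely. Either way, the only delicate point---which you flag---is that $g\to A$ is what guarantees $h$ is bounded below, and you handle it correctly.
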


\begin{proof}
Multiply both sides of $\frac{df}{dt} + \frac{dg}{dt} +\lambda f \leq 0$ by $e^{\lambda t}$, then 
\begin{align} \label{A-0}
 \frac{d}{dt}(e^{\la t}f)+ e^{\la t} \frac{dg}{dt}  \leq 0.
\end{align}
Integrate (\ref{A-0}) over $[t_0,t]$, for $0<t_0 < t$, to get
\[
e^{\la t}f (t) - e^{\la t_0}f (t_0) + \int_{t_0}^t \Big(e^{\la s} \frac{dg}{dt}\Big) ds  \leq 0.
\]
Integration by parts leads to
\[
e^{\la t}f (t) -e^{\la t_0}f (t_0)+ e^{\la t}g (t) - e^{\la t_0}g (t_0)- \la \int_{t_0}^t e^{\la s} g(s) ds  \leq 0.
\]
Thus,
\begin{align} \label{A-1}
f (t) - e^{-\la (t-t_0)} f(t_0) + g (t) - e^{-\la (t-t_0)} g(t_0) - \la e^{-\la t} \int_{t_0}^t e^{\la s} g(s) ds  \leq 0.
\end{align}
Since $\lim_{t\rightarrow \infty}  g(t)= A$, by L'Hôpital's rule, we have
\[
\lim_{t\rightarrow \infty}   \la e^{-\la t} \int_{t_0}^t e^{\la s} g(s) ds  = \lim_{t\rightarrow \infty}  g(t) =A.
\]
Then, by letting $t\rightarrow \infty$ on (\ref{A-1}), we get
$\lim_{t\rightarrow \infty}   f (t) \leq 0$. Since we assume $f(t) \geq 0$ for all $t>0$, 
we conclude that $\lim_{t\rightarrow \infty}   f (t) = 0$.

\end{proof}

Now we are ready to prove Proposition \ref{INFTY}.

\begin{proof}
Assume $\mathbf u_0 \in \mathcal H \cap (H^1(\Omega))^2$ and $\mathbf f=0$. Let $\mathbf u =(u,v)$ be the global solution of equation (\ref{PDE}). 
Since $u_x + v_y =0$, we get $\bar v_y =0$. Then, because of the boundary condition $v|_{y=0} = v|_{y=1}=0$, we see that $\bar v (y)=0$ for all $y\in [0,1]$. 
Hence, $\overline{v \bar u_y} = \bar v \bar u_y =0$. Then, taking the horizontal mean on equation (\ref{be-3}) gives
\begin{align} \label{la-1}
\bar u_t + \overline{v u_y} = \bar u_t + \overline{v \tilde u_y} =0.
\end{align}

We estimate
\begin{align} \label{la-2}
&\|\overline{v \tilde u_y} \|_{L^2(0,1)}= \Big(\int_0^1 \Big[\int_0^1 (v \tilde u_y) dx  \Big]^2 dy \Big)^{1/2}
\leq \|\tilde u_y\|_2  \Big(\sup_{y\in [0,1]} \int_0^1 v^2 dx \Big)^{1/2} \notag\\
&\leq  C\|\tilde u_y\|_2 \|v_y\|_2 \leq C\|\tilde u_{xy}\|_2 \|u_x\|_2=  C\|u_{xy}\|_2 \|u_x\|_2 \leq C (\|u_{xy}\|_2^2 +  \|u_x\|_2^2).
\end{align}

Integrating (\ref{la-1}) over $[s,t]$ gives that $\bar u(t) - \bar u(s) = -\int_s^t  \overline{v \tilde u_y} d\alpha$, and along with (\ref{la-2}), we obtain
\begin{align} \label{la-3}
\|\bar u(t) - \bar u(s)\|_{L^2(0,1)} \leq \int_s^t   \|\overline{v \tilde u_y}\|_{L^2(0,1)} d\alpha  \leq C \int_s^t  (\|u_{xy}\|_2^2 + \|u_x\|_2^2) d\alpha,
\end{align}
for any $t>s\geq 0$. 

Because of global estimates (\ref{e-2}) and (\ref{v-2-0}), we have 
\begin{align} \label{la-4}
\int_0^{\infty}  (\|u_{xy}\|_2^2 + \|u_x\|_2^2) d\alpha \leq \|\omega_0\|_2^2 + \|\mathbf u_0\|_2^2,
\end{align}
since $\mathbf f =0$.

Due to (\ref{la-3}) and (\ref{la-4}), we obtain for any $\epsilon>0$, there exists $T>0$ such that 
$\|\bar u(t) - \bar u(s)\|_{L^2(0,1)} < \epsilon$ when $t, s\geq T$. Thus,
there exists $\phi(y) \in L^2(0,1)$ such that 
\begin{align} \label{la-5}
\lim_{t\rightarrow \infty} \bar u =  \phi \;\; \text{in} \;\; L^2(0,1).
\end{align}
Since $\phi(y)$ depends solely on $y$, the velocity field $(\phi(y),0)$ is a steady state solution of equation (\ref{PDE}). 

Finally, due to the energy identity $\frac{1}{2} \frac{d}{dt} (\|u\|_2^2 + \|v\|_2^2) + \|u_x\|_2^2 + \|v_x\|_2^2 =0$ and the fact that $\bar v =0$, we obtain
\begin{align*}
\frac{1}{2} \frac{d}{dt} (\|\tilde u\|_2^2 +  \|\bar u\|_2^2   +   \|\tilde v\|_2^2) + \|\tilde u_x\|_2^2 + \|\tilde v_x\|_2^2 =0,  \;\; \text{for} \; t>0.
\end{align*}
It follows that 
\begin{align*}
 \frac{d}{dt} (\|\tilde u\|_2^2 +  \|\tilde v\|_2^2) +   \frac{d}{dt} (\|\bar u\|_2^2)   + \lambda (\|\tilde u\|_2^2 + \|\tilde v\|_2^2) \leq 0,      \;\; \text{for} \; t>0.
 \end{align*}
From (\ref{la-5}), we have $\lim_{t\rightarrow \infty} \|\bar u\|_2 = \|\phi\|_2$, then using Lemma \ref{lemmA}, we get
\begin{align} \label{la-6}
\lim_{t\rightarrow \infty} (\|\tilde u\|_2^2 +  \|\tilde v\|_2^2)  =0.
\end{align}
Because of (\ref{la-5}) and (\ref{la-6}) as well as $\bar v=0$, we conclude 
\begin{align*}
\lim_{t\rightarrow \infty}(u,v) = (\phi, 0)    \;\;   \text{in}  \;\; \mathcal H,
\end{align*}
where $(\phi(y),0)$ is a steady state solution of equation (\ref{PDE}) when $\mathbf f=0$.
\end{proof}

\bigskip


\begin{thebibliography}{99} 

\bibitem{ABC} 
D. Albritton, E. Brué, and M. Colombo,
\emph{Non-uniqueness of Leray solutions of the forced Navier-Stokes equations},
Ann. of Math. (2) 196 (2022), 415--455.

\bibitem{AC} 
D. Albritton and M. Colombo, 
\emph{Non-uniqueness of Leray solutions to the hypodissipative Navier-Stokes equations in two dimensions},
Comm. Math. Phys. 402 (2023), 429--446.  


\bibitem{CMZ} D. C\'ordoba, L. Martinez-Zoroa, and F. Zheng, 
\emph{Finite time blow-up for the hypodissipative Navier Stokes equations with a force in $L^1_t C_x^{1,\epsilon} \cap L^{\infty}_t L^2_x$}, arXiv:2407.06776 [math.AP]



\bibitem{DWXZ} B. Dong, J. Wu, X. Xu, and N. Zhu, \emph{Stability and exponential decay for the 2D anisotropic Navier-Stokes equations with horizontal dissipation}, J. Math. Fluid Mech. 23 (2021), Paper No. 100, 11 pp.



\bibitem{Elgindi} T. Elgindi, \emph{Finite-time singularity formation for $C^{1,\alpha}$ solutions to the incompressible Euler equations on $\mathbb R^3$}, Ann. of Math. (2) 194 (2021), 647--727.


\bibitem{Grafakos} L. Grafakos, \emph{Classical Fourier analysis}, Second edition, Graduate Texts in Mathematics 249, Springer, New York, 2008.


\bibitem{KS}
A. Kiselev and V. Šverák, 
\emph{Small scale creation for solutions of the incompressible two-dimensional Euler equation},
Ann. of Math. (2) 180 (2014), 1205--1220. 


\bibitem{LZZ} S. Liang, P. Zhang, and R. Zhu, \emph{Deterministic and stochastic 2D Navier-Stokes equations with anisotropic viscosity}, J. Differential Equations 275 (2021), 473--508.


\bibitem{Yudo} V. I. Yudovich, \emph{Non-stationary flows of an ideal incompressible fluid}, Ž. Vyčisl. Mat i Mat. Fiz. 3 (1963), 1032--1066. 
 
 
 \bibitem{V1} M. Vishik, \emph{Instability and non-uniqueness in the Cauchy problem for the Euler equations of an ideal incompressible fluid. Part I}, arXiv:1805.09426 [math.AP].
 
\bibitem{V2} M. Vishik, \emph{Instability and non-uniqueness in the Cauchy problem for the Euler equations of an ideal incompressible fluid. Part II}, arXiv:1805.09440 [math.AP].
   
 
 
 
 
\end{thebibliography}
\end{document}